\newtheorem{thm}{Theorem}[section]
 \newtheorem{cor}[thm]{Corollary}
 \newtheorem{lem}[thm]{Lemma}
 \newtheorem{prop}[thm]{Proposition}
 \theoremstyle{definition}
 \newtheorem{defn}[thm]{Definition}
 \theoremstyle{remark}
 \newtheorem{rem}[thm]{Remark}
 \numberwithin{equation}{section}
\newcommand{\V}{\mathcal V}
\begin{document}

\title[Generalized quaternionic Schur functions]
{Generalized quaternionic Schur functions in the ball and half-space and Krein-Langer factorization}

\author[D. Alpay]{Daniel Alpay}
\address{(DA) Department of Mathematics\\
Ben-Gurion University of the Negev\\
Beer-Sheva 84105 Israel} \email{dany@math.bgu.ac.il}
\author[F. Colombo]{Fabrizio Colombo}
\address{(FC) Politecnico di
Milano\\Dipartimento di Matematica\\ Via E. Bonardi, 9\\20133
Milano, Italy}
\email{fabrizio.colombo@polimi.it}
\author[I. Sabadini]{Irene Sabadini}
\address{(IS) Politecnico di
Milano\\Dipartimento di Matematica\\ Via E. Bonardi, 9\\20133
Milano, Italy}
\email{irene.sabadini@polimi.it}

\thanks{D. Alpay thanks the
Earl Katz family for endowing the chair which supported his
research, and the Binational Science Foundation Grant number
2010117. I. Sabadini was partially supported by the FIRB Project {\em Geometria Differenziale Complessa e Dinamica Olomorfa}}

\subjclass{47A48, 30G35, 30D50}
\keywords{Schur functions,
realization, reproducing kernels, slice hyperholomorphic
functions, Blaschke products.}

\begin{abstract}
In this paper we prove a new version of Krein-Langer factorization theorem in the slice hyperholomorphic setting which is more general than the one proved in \cite{acs3}. We treat both the case of functions with $\kappa$ negative squares defined on subsets of the quaternionic unit ball or on subsets of the half space of quaternions with positive real part. A crucial tool in the proof of our results is the Schauder-Tychonoff theorem and an invariant subspace theorem for contractions in a Pontryagin space.
\end{abstract}

\maketitle

\section{Introduction}
\setcounter{equation}{0}
\subsection{Some history}
Functions analytic and contractive in the open unit disk $\mathbb
D$ play an important role in various fields of mathematics, in
electrical engineering and digital signal processing.
They bear various names, and in particular are called Schur functions.
We refer to \cite{hspnw} for reprints of some of the original works.
Andr\'e Bloch's 1926 memoir \cite{bloch-1926} contains also valuable historical
background.
\smallskip

Schur functions admit a number of generalizations,  within function
theory of one complex variable and outside. A $\mathbb C^{r\times
s}$-valued function $S$ analytic in $\mathbb D$ is a Schur
function if and only if the kernel
\[
K_S(z,w)=\frac{I_r-S(z)S(w)^*}{1-z\bar w}
\]
is positive definite in $\mathbb D$. In fact, much more is true.
It is enough to assume that the kernel $K_S(z,w)$ is positive
definite in some subset $\Omega$ of $\mathbb D$ to insure that
$S$ is the restriction to $\Omega$ of a (not necessarily unique)
function analytic and contractive in $\mathbb D$; see
\cite{MR2002b:47144,donoghue}.\smallskip

A Schur function has no poles inside the open unit disk. Motivated
by lowering a lower bound given by Carath\'eodory and Fej\'er in
an interpolation problem, Takagi considered in
\cite{takagi,takagi2} rational functions bounded by $1$ in
modulus on the unit circle and with poles inside $\mathbb D$.
These are the first instances of what is called a generalized
Schur function. Later studies of such functions include Chamfy
\cite{chamfy}, Dufresnoy \cite{MR20:5861} (these authors being
motivated by the study of Pisot numbers), Delsarte, Genin and
Kamp \cite{bulbs,DDGK} and Krein and Langer \cite{kl1,kl2,kl3}, to
mention a few names. The precise definition of a generalized
Schur function was given (in the setting of operator valued
functions) by Krein and Langer \cite{kl1}:

\begin{defn}
\label{negsquares} A $\mathbb C^{r\times s}$-valued function
analytic in an open subset $\Omega$ of the unit disc is called a
generalized Schur function if the kernel $K_S$ has a finite
number  (say $\kappa$) of negative  squares in $\Omega$, meaning
that for every choice of $N\in\mathbb N$, $c_1,\ldots,
c_N\in\mathbb C^r$ and $w_1,\ldots ,w_N\in\Omega$, the $N\times
N$ Hermitian matrix with $(\ell,j)$ entry
$c_\ell^*K_S(w_\ell,w_j)c_j$ has at most $\kappa$ strictly
negative eigenvalues, and exactly  $\kappa$ strictly negative
eigenvalues for some choice of $N,c_1,\ldots, c_N,w_1,\ldots
,w_N$.
\end{defn}

In the setting of matrix-valued functions, the result of Krein
and Langer states that $S$ is a generalized Schur function if and
only if it is the restriction to $\Omega$ of a function of the
form
\[
B_0(z)^{-1}S_0(z),
\]
where $S_0$ is a $\mathbb C^{r\times s}$-valued Schur function
and $B_0$ is $\mathbb C^{r\times r}$-valued Blaschke product of
degree $\kappa$. Besides \cite{kl1}, there exist various proofs
of this result; see for instance \cite[p. 141]{adrs},
\cite{MR2002664}.\smallskip

That one cannot remove the analyticity condition in the result of
Krein and Langer when $\kappa>0$ is illustrated by the well known
counterexample $S(z)=\delta_0(z)$, where $\delta_0(z)=0$ if $z\not=0$ and $\delta_0(0)=1$ (see for instance \cite[p. 82]{adrs}).
Taking into account that $z^n\delta_0(z)\equiv 0$ for $n>0$ we
have
\[
\begin{split}
\frac{1-S(z)S(w)^*}{1-z\bar w}&=\frac{1}{1-z\bar w}-\sum_{n=0}^\infty
z^n\delta_0(z)\bar w^{n}\delta_0(w)\\
&=\frac{1}{1-z\bar w}- \delta_0(z)\delta_0(w).
\end{split}
\]
The reproducing kernel Hilbert space associated with $\delta_0(z)\delta_0(w)$ is
$\mathbb C\delta_0$ and has a zero intersection with $\mathbf
H_2(\mathbb D)$, and hence the kernel $K_S$ has one negative square.

\subsection{The slice hyperholomorphic case}
Schur functions have been extended to numerous settings, and we
mention in particular the setting of  several complex variables
\cite{agler-hellinger,btv}, compact Riemann surfaces \cite{av3}
and hypercomplex functions \cite{alss1,assv}.  Generalized Schur
functions do not exist necessarily in all these
settings.\smallskip

In \cite{acs1} we began a study of Schur analysis in the framework
of slice hyperholomorphic functions. The purpose of this paper
is to prove the theorem of Krein and Langer (we
considered a particular case in \cite{acs3}) and we treat both the
unit ball and half-space cases in the quaternionic setting. To that purpose we need in
particular the following:
\begin{itemize}
\item [(i)] The notion of negative squares and of reproducing kernel
Pontryagin spaces in the quaternionic setting. This was done in
\cite{as3}.

\item[(ii)] The notion of generalized Schur functions and of
Blaschke products, see \cite{MR3127378}.

\item [(iii)] A result on invariant subspaces of contractions in
quaternionic Pontryagin spaces.

\item[(iv)] The notion of realization in the
slice-hyperholomorphic setting, in particular when the state
space is a one-sided (as opposed to two-sided) Pontryagin space.
\end{itemize}

The paper contains 6 sections, besides the Introduction. Section 2 contains a
quick survey of the Krein--Langer result in the classical case. Section 3
introduces slice hyperholomorphic functions and discusses Blaschke products.
Section 4 contains some useful results in quaternionic functional analysis,
among which Schauder-Tychonoff theorem. In Section 5 we present generalized
Schur functions and their realizations. Finally, in Section 6  we prove the
Krein-Langer factorization for generalized Schur functions defined in a subset
of the unit ball and finally, in Section 7, we state the analogous result in
the case of the half-space.
\section{A survey of the classical case}
\setcounter{equation}{0}

The celebrated one-to-one correspondence between positive
definite functions and reproducing kernel Hilbert spaces (see
\cite{aron}) extends to the indefinite case, when one considers
functions with a finite number of negative squares and
reproducing kernel Pontryagin spaces; see \cite{ad3,schwartz,
Sorjonen73}. We recall the definition of the latter for the
convenience of the reader.\smallskip

A complex vector space $\V$ endowed with a sesquilinear form
$[\cdot,\cdot]$ is called an indefinite inner product space
(which we will also denote by the pair $(\V,[\cdot,\cdot])$). The
form $[\cdot,\cdot]$ defines an orthogonality: two vectors
$v,w\in \V$ are orthogonal if $[v,w]=0$, and two linear subspaces
$\V_1$ and $\V_2$ of $\V$ are orthogonal if every vector of
$\V_1$ is orthogonal to every vector of $\V_2$. Orthogonal sums
will be denoted by the symbol $[+]$. Note that two orthogonal
spaces may intersect. We will denote by the symbol $[\oplus]$ a
direct orthogonal sum. A complex vector space $\V$ is a Krein
space if it can be written (in general in a non-unique way) as
\begin{equation}
\label{decomposition11}
\V=\V_+[\oplus]\V_-,
\end{equation}
where $(\V_+,[\cdot,\cdot])$ and
$(\V_-,-[\cdot,\cdot])$ are Hilbert spaces. When the space $\V_-$
(or, as in \cite{ikl}, the space $\V_+$) is finite dimensional
(note that this property does not depend on the decomposition),
$\V$ is called a Pontryagin space. The space $\mathcal V$ endowed with the form
\[
\langle h,g\rangle=[h_+,g_+]-[h_-,g_-],
\]
where $h=h_++h_-$ and $g=g_++g_-$ are the decompositions of
$f,g\in\mathcal V$ along \eqref{decomposition11}, is a Hilbert
space. One endows $\mathcal V$ with the corresponding topology.
This topology is independent of the decomposition
\eqref{decomposition11} (the latter is not unique, but is in the
definite case).\smallskip

Let now $T$ be a linear densely defined map from a Pontryagin space
$(\mathcal P_1,[\cdot,\cdot]_1)$ into a Pontryagin
space $(\mathcal P_2,[\cdot,\cdot]_2)$. Its adjoint is the operator
$T^*$ with domain ${\rm Dom}\, (T^*)$ defined by:
\[
\left\{g\in\mathcal P_2\,\,:\, h\mapsto\,\, [Th,g]_2\,\,\text{is continuous}\right\}.
\]
One then defines by $T^*g$ the unique element in $\mathcal P_1$ which satisfies
\[
[Th,g]_2=[h,T^*g]_1.
\]
Such an element exists by the Riesz representation theorem.\smallskip

The operator $T$ is called a contraction if
\[
[Th,Th]_2\le [h,h]_1,\quad \forall\, h\in{\rm Dom}\, (T),
\]
while it is said to be a coisometry if $TT^*=I$.

\begin{thm}
A densely defined contraction between Pontryagin spaces of the same
index has a unique contractive extension and its adjoint
is also a contraction.
\label{tmtm1}
\end{thm}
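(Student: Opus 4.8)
The plan is to reduce the statement to a block decomposition adapted to a maximal negative subspace sitting inside the domain, after which the Pontryagin-space assertion becomes a pair of Hilbert-space facts plus finite-dimensional bookkeeping. Let $\kappa$ be the common index and fix fundamental decompositions $\mathcal{P}_i=\mathcal{P}_i^+[\oplus]\mathcal{P}_i^-$ with $\dim\mathcal{P}_i^-=\kappa$, writing $\langle\cdot,\cdot\rangle_i$ for the associated Hilbert inner product. First I would produce a maximal negative subspace inside $\mathrm{Dom}\,(T)$: since $\mathrm{Dom}\,(T)$ is dense and the Gram matrix of a $\kappa$-tuple of vectors depends continuously on the vectors (and equals $-I$ on an orthonormal basis of $\mathcal{P}_1^-$), one can choose $\ell_1,\dots,\ell_\kappa\in\mathrm{Dom}\,(T)$ spanning a $\kappa$-dimensional uniformly negative subspace $\mathcal{L}_1\subset\mathrm{Dom}\,(T)$, which, being $\kappa$-dimensional and negative, is maximal negative. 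The contraction inequality gives $[Th,Th]_2\le[h,h]_1<0$ for $0\ne h\in\mathcal{L}_1$, so $T$ is injective on $\mathcal{L}_1$ and $\mathcal{L}_2:=T\mathcal{L}_1$ is a $\kappa$-dimensional negative subspace of $\mathcal{P}_2$; here the equality of indices is used to conclude that $\mathcal{L}_2$ is \emph{maximal} negative, so that $\mathcal{H}_i:=\mathcal{L}_i^{[\perp]}$ are Hilbert spaces and $\mathcal{P}_i=\mathcal{H}_i[\oplus]\mathcal{L}_i$.

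With respect to these decompositions I would write $T$ in block form. Because $T\mathcal{L}_1\subseteq\mathcal{L}_2$, the block sending $\mathcal{L}_1$ into $\mathcal{H}_2$ vanishes, so on $\mathrm{Dom}\,(T)=(\mathrm{Dom}\,(T)\cap\mathcal{H}_1)[\oplus]\mathcal{L}_1$ the operator is lower triangular, with entries $A\colon\mathcal{H}_1\to\mathcal{H}_2$, $C\colon\mathcal{H}_1\to\mathcal{L}_2$ and the bijection $D:=T|_{\mathcal{L}_1}\colon\mathcal{L}_1\to\mathcal{L}_2$ (finite-dimensional, hence bounded with bounded inverse). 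Writing $\|\cdot\|_\pm$ for the norms on the positive and negative parts, the contraction inequality for $h=x+\ell$ reads
\[
\|Ax\|_+^2-\|Cx+D\ell\|_-^2\le\|x\|_+^2-\|\ell\|_-^2,
\]
and the choice $\ell=-D^{-1}Cx$ annihilates the negative part of $Tx$ and yields $\|Ax\|_+\le\|x\|_+$ together with $\|Cx\|_-\le\|D\|\,\|x\|_+$. Thus $T$ is continuous on its domain, and by density it extends uniquely to a continuous operator $\tilde T$ on $\mathcal{P}_1$; passing to the limit in the contraction inequality shows $\tilde T$ is a contraction, while uniqueness is immediate, since any two contractive (hence continuous) extensions agree on the dense domain.

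It remains to see that the adjoint is a contraction, and this is the step that genuinely requires the equality of indices. In the same block form one computes the Pontryagin adjoint $\tilde T^*$ to be upper triangular with entries $A^\dagger$, $-C^\dagger$, $D^\dagger$ (Hilbert-space adjoints for $\langle\cdot,\cdot\rangle_\pm$), and the desired inequality $[\tilde T^*g,\tilde T^*g]_1\le[g,g]_2$ reduces to an operator inequality that I would derive from the contraction inequality for $\tilde T$ by a Schur-complement manipulation, crucially exploiting the invertibility of $D$. The main obstacle is precisely this last passage: boundedness and the existence of the extension are comparatively soft, whereas the implication \emph{contractivity of $\tilde T$} $\Rightarrow$ \emph{contractivity of $\tilde T^*$} is false without matched indices, and the whole argument hinges on the earlier observation that equality of the indices forces $\mathcal{L}_2=T\mathcal{L}_1$ to be maximal, making $\mathcal{H}_2$ a Hilbert space and $D$ invertible. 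A cleaner, equivalent route to this last step is the Potapov--Ginzburg transform, which (because $D$ is invertible) turns $\tilde T$ into a genuine Hilbert-space contraction, for which contractivity of the adjoint is automatic and then transfers back to $\tilde T^*$.
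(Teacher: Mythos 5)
A preliminary remark: the paper offers no proof of Theorem \ref{tmtm1}; it is stated as a known fact with a pointer to the literature on operators in indefinite inner product spaces (\cite{azih,bognar,MR1364446}), and its quaternionic analogue, Theorem \ref{tmtm2}, is quoted from \cite{acs3}. Your argument is therefore measured against the standard proofs, and it is essentially the standard (Dritschel--Rovnyak) one. The first assertion is proved completely and correctly: the perturbation argument placing a $\kappa$-dimensional uniformly negative subspace $\mathcal{L}_1$ inside the dense domain, the observation that $[Th,Th]_2\le[h,h]_1<0$ makes $T$ injective on $\mathcal{L}_1$ while equality of the indices forces $\mathcal{L}_2=T\mathcal{L}_1$ to be \emph{maximal} negative, and the bound $\|Ax\|^2+\|D^{-1}Cx\|^2\le\|x\|^2$ obtained from the substitution $\ell=-D^{-1}Cx$ are all sound; continuity plus density then give the contractive extension. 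Two points deserve an explicit sentence: $\mathcal{H}_1\cap\mathrm{Dom}\,(T)$ is dense in $\mathcal{H}_1$ because $\mathcal{L}_1\subset\mathrm{Dom}\,(T)$, and the parenthetical ``contractive, hence continuous'' used for uniqueness is really your own boundedness argument rerun with $\mathrm{Dom}\,(T)=\mathcal{P}_1$.

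The gap is in the second assertion. You set it up correctly --- the block form of $\tilde T^*$ and the role of the invertibility of $D$ (the $(2,2)$ block of the contraction inequality even gives $D^\dagger D\ge I$) --- but the implication ``$\tilde T$ contractive $\Rightarrow$ $\tilde T^*$ contractive'' is announced rather than derived: ``an operator inequality that I would derive\ldots by a Schur-complement manipulation'' is a plan, not a proof, and you yourself flag it as the main obstacle. Since this is precisely the part of the theorem that carries content, the proof as written is incomplete. The Potapov--Ginzburg route you name does close it, and in your coordinates it is short: writing $J_i=\mathrm{diag}(I,-I)$ for the Gram operators of $\mathcal{H}_i[\oplus]\mathcal{L}_i$ and $\dagger$ for Hilbert-space adjoints, conjugation of $J_1-\tilde T^\dagger J_2\tilde T\ge 0$ by $\mathrm{diag}(I,D^{-1})$ and then by $\begin{pmatrix}I&0\\-C&I\end{pmatrix}$ shows that $\widehat T=\begin{pmatrix}A&0\\-D^{-1}C&D^{-1}\end{pmatrix}$ satisfies $\widehat T^\dagger\widehat T\le I$; hence also $\widehat T\widehat T^\dagger\le I$, because $\widehat T$ acts between genuine Hilbert spaces, and conjugating back by $\mathrm{diag}(I,D^\dagger)$ (plus a harmless sign flip of the off-diagonal blocks) yields $J_2-\tilde T J_1\tilde T^\dagger\ge 0$, which is contractivity of $\tilde T^*$. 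Carry that computation out, or cite \cite{MR1364446} for it, and the proof is complete.
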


We refer to \cite{azih,bognar,MR1364446} for the theory of Pontryagin and Krein spaces,
 and of their operators.\smallskip

With these definitions, we can state the following theorem, which
gathers the main properties of generalized Schur functions.

\begin{thm}
Let $S$ be a $\mathbb C^{r\times s}$-valued function analytic in a neighborhood $\Omega$ of
the origin. Then the following are equivalent:\\
$(1)$ The kernel $K_S(z,w)$ has a finite number of negative squares in $\Omega$.\\
$(2)$ There is a Pontryagin space $\mathcal P$ and a coisometric operator matrix
\[
\begin{pmatrix}A&B\\ C&D\end{pmatrix}\ : \ \mathcal P\oplus \mathbb C^s \to \mathcal P\oplus \mathbb C^r
\]
such that
\begin{equation}
\label{realco}
S(z)=D+zC(I-zA)^{-1}B,\quad z\in\Omega.
\end{equation}
$(3)$ There exists a $\mathbb C^{r\times s}$-valued Schur
function $S_0$ and a $\mathbb C^{r\times r}$-valued Blaschke
product $B_0$ such that
\[
S(z)=B_0(z)^{-1}S_0(z),\quad z\in\Omega.
\]
\end{thm}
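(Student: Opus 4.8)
The plan is to establish the cycle $(1)\Rightarrow(2)\Rightarrow(3)\Rightarrow(1)$, which suffices for the equivalence of the three statements. The two ingredients I would lean on are the correspondence between Hermitian kernels with finitely many negative squares and reproducing kernel Pontryagin spaces (the indefinite analogue of Aronszajn's theorem, as in the references cited above), and the invariant subspace theorem for contractions on a Pontryagin space listed as ingredient (iii) in the Introduction.

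For $(1)\Rightarrow(2)$, assuming $K_S$ has $\kappa$ negative squares in $\Omega$, I would first produce the reproducing kernel Pontryagin space $\mathcal P(S)$ with reproducing kernel $K_S$ and negative index $\kappa$. One then checks that $\mathcal P(S)$ is invariant under the backward-shift (difference-quotient) operator $R_0\colon f(z)\mapsto (f(z)-f(0))/z$ and sets $A=R_0$, $Cf=f(0)$, $D=S(0)$, and $Bc=(S(z)-S(0))c/z$. The coisometry of $\begin{pmatrix}A&B\\ C&D\end{pmatrix}$ on $\mathcal P(S)\oplus\mathbb C^s$ then reduces to the reproducing kernel identities for $K_S(\cdot,w)$ and $S(\cdot)c$; this is the de Branges--Rovnyak backward-shift model carried out in the indefinite metric, and it yields the realization \eqref{realco}.

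The step $(2)\Rightarrow(3)$ is the heart of the matter and the part I expect to be the main obstacle. From $\begin{pmatrix}A&B\\ C&D\end{pmatrix}$ coisometric one reads off $AA^*+BB^*=I$, hence for every $h\in\mathcal P$ one has $[A^*h,A^*h]=[h,h]-[B^*h,B^*h]\le[h,h]$, so that $A^*$ is a contraction on the Pontryagin space $\mathcal P$ of index $\kappa$. I would then apply the invariant subspace theorem to obtain a maximal negative, $A^*$-invariant subspace $\mathcal M$, which is necessarily $\kappa$-dimensional and on which the spectrum of $A^*$ is contained in the open unit disk. The eigenvalues of $A^*$ restricted to $\mathcal M$ prescribe the zeros of a Blaschke product $B_0$ of degree $\kappa$, and the delicate verification is that $S_0:=B_0S$ extends analytically across these points to a genuine Schur function, i.e. that its kernel $K_{S_0}$ is positive definite; concretely, splitting off the finite-dimensional negative subspace $\mathcal M$ should leave a Hilbert-space (positive) colligation whose transfer function is exactly $S_0$.

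For $(3)\Rightarrow(1)$, suppose $S=B_0^{-1}S_0$ with $S_0$ Schur and $B_0$ a Blaschke product. Writing $B_0B_0^*-S_0S_0^*=(I-S_0S_0^*)-(I-B_0B_0^*)$ and dividing by $1-z\bar w$ gives
\[
K_S(z,w)=B_0(z)^{-1}\big(K_{S_0}(z,w)-K_{B_0}(z,w)\big)B_0(w)^{-*}.
\]
Here $B_0^{-1}K_{S_0}B_0^{-*}$ is positive definite because $S_0$ is Schur, while $-B_0^{-1}K_{B_0}B_0^{-*}$ is the negative of a positive kernel whose reproducing kernel space is finite dimensional of dimension $\deg B_0$. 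Congruence by the invertible $B_0^{-1}$ preserves signatures, so $K_S$ is the sum of a positive kernel and a kernel with $\deg B_0$ negative squares; hence it has at most $\deg B_0$, in particular finitely many, negative squares. This is $(1)$ and closes the cycle.
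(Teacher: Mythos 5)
Your overall architecture ($(1)\Rightarrow(2)\Rightarrow(3)\Rightarrow(1)$) is the standard one, and two of the three legs are fine: $(1)\Rightarrow(2)$ via the reproducing kernel Pontryagin space $\mathcal P(S)$ and the backward-shift colligation matches the paper's remark $(b)$, and $(3)\Rightarrow(1)$ via the congruence $K_S=B_0^{-1}\bigl(K_{S_0}-K_{B_0}\bigr)B_0^{-*}$ is correct. Note, though, that the paper does not actually prove this classical statement --- Section 2 is a survey and refers to \cite{kl1}, \cite{adrs}, \cite{MR2002664} --- so the relevant benchmark is the proof of the quaternionic analogue in Section 6, which your $(2)\Rightarrow(3)$ is implicitly shadowing. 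That is where the genuine gap lies.

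Two concrete problems with your $(2)\Rightarrow(3)$. First, you apply the invariant subspace theorem to $A^*$ and claim the spectrum of $A^*|_{\mathcal M}$ is contained in the open unit disk. This is backwards: if $T$ is a contraction on a Pontryagin space and $\mathcal M$ is a negative invariant subspace, then for $h\in\mathcal M$ one has $-[Th,Th]\ge -[h,h]$, so $T|_{\mathcal M}$ is \emph{expansive} in the Hilbert space $(\mathcal M,-[\cdot,\cdot])$ and its eigenvalues have modulus at least $1$. The paper uses exactly the opposite location --- the spectrum of $A_{\mathcal M}$ lies \emph{outside} the closed unit ball --- and this is what places the corresponding singularities of $S$ (the zeros of $B_0$) strictly inside the disk and makes the Stein equation uniquely solvable. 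Moreover, the paper takes the maximal negative invariant subspace of $A$, not of $A^*$: in the backward-shift model $A=R_0$, so an $A$-invariant subspace is a finite-dimensional $R_0$-invariant space of functions, identifiable after renorming with a space $\mathcal P(B)$, and this is what produces the \emph{left} factor $B_0^{-1}$; an $A^*$-invariant subspace would, through the cascade decomposition of the colligation, put the degree-$\kappa$ factor on the wrong side. (Both $A$ and $A^*$ are contractions here, so the fix is only to switch operators, but it must be made.) Second, the step you label ``the delicate verification'' --- that $B_0S$ is a genuine Schur function --- is where essentially all the work is, and you do not carry it out. In the paper's Section 6 this occupies STEPs 3 and 4: one solves $A_{\mathcal M}^*P_{\mathcal M}A_{\mathcal M}=P_{\mathcal M}-C_{\mathcal M}^*C_{\mathcal M}$, shows $P_{\mathcal M}\le G_{\mathcal M}$ so that $\mathcal M$ with the $P_{\mathcal M}$ metric is contractively included in $\mathcal P(S)$ and coincides with $\mathcal P(B)$, and then deduces $K_S-K_B=(K_S-k_{\mathcal M})+(k_{\mathcal M}-K_B)\ge 0$, i.e.\ the positivity of $K_{S_0}$. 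Without some version of this argument your proof of $(2)\Rightarrow(3)$, and hence of the theorem, is incomplete.
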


As a corollary we note that $S$ can be extended to a function of
bounded type in $\mathbb D$, with boundary limits almost
everywhere of norm less than or equal $1$.\\

\noindent We note the following:\\
$(a)$  When the pair $(C,A)$ is observable, meaning
\begin{equation}
\label{obsr}
\cap_{n=0}^\infty\ker CA^n=\left\{0\right\},
\end{equation}
the realization \eqref{realco} is unique, up to an isomorphism of
Pontryagin spaces.\\
$(b)$ One can take for $\mathcal P$ the reproducing kernel
Pontryagin space $\mathcal P(S)$ with reproducing kernel $K_S$.
When $0\in\Omega$ we have the backward shift realization
\[
\begin{split}
Af&=R_0f,\\
Bc&=R_0Sc,\\
Cf&=f(0),\\
Dc&=S(0) c,
\end{split}
\]
where $f\in\mathcal P(S)$, $c\in\mathbb C^s$ and where
$R_0$ denotes the backward shift operator
\[
R_0f(z)=\begin{cases}\,\, \displaystyle\frac{f(z)-f(0)}{z},\,\, z\not=0,\\
                     \,\, f^\prime(0),\,\,\,\,\,\,\,\,\,\, z=0.
                     \end{cases}
\]
See \cite{adrs} for more details on this construction, and on the related
isometric and unitary realizations.

\section{Slice hyperholomorphic functions and Blaschke products}

Let $\mathbb H$ be the real associative algebra of quaternions, where a
quaternion $p$ is denoted by $p=x_0+ix_1+jx_2+kx_3$, $x_i\in \mathbb{R}$,
and the elements $\{1, i,j,k \}$
satisfy the relations
$
i^2=j^2=k^2=-1,\
 ij =-ji =k,\
jk =-kj =i ,
 \  ki =-ik =j .
$
As is customary, $\bar p=x_0-ix_1-jx_2-kx_3$ is called the conjugate of $p$, the
real part $x_0=\frac 12 (p+\bar p)$ of a quaternion is also denoted by ${\rm Re}(p)$,
while $|p|^2=p\overline{p}$.
 The symbol $\mathbb{S}$ denotes the 2-sphere of purely imaginary unit
 quaternions, i.e.
$$
\mathbb{S}=\{ p=ix_1+jx_2+kx_3\ |\  x_1^2+x_2^2+x_3^2=1\}.
$$
If $I\in\mathbb S$ then $I^2=-1$ and  any nonreal quaternion
$p=x_0+ix_1+jx_2+kx_3$ uniquely determines an element
$I_p=(ix_1+jx_2+kx_3)/|ix_1+jx_2+kx_3|\in\mathbb S$.  \\
(We note that later $i,j,k$ may also denote some indices, but the context will make clear the use of the notation).\\
Let $\mathbb C_I$ be the
complex plane $\mathbb{R}+I\mathbb{R}$ passing through $1$
and $I$ and let $x+Iy$ be an element on $\mathbb{C}_I$. Any $p=x+Iy$ defines a 2-sphere $[p]=\{x+Jy\, \, :\, J\in\mathbb S\}$.\\
We now recall the notion of slice hyperholomorphic function:
\begin{defn}
Let $\Omega\subseteq\mathbb H$ be an open set and let
$f:\ \Omega\to\mathbb H$ be a real differentiable function. Let
$I\in\mathbb{S}$ and let $f_I$ be the restriction of $f$ to the
complex plane $\mathbb{C}_I$.
 We say that $f$ is a (left) slice hyperholomorphic function
  in $\Omega$ if, for every
$I\in\mathbb{S}$, $f_I$ satisfies
$$
\frac{1}{2}\left(\frac{\partial }{\partial x}+I\frac{\partial
}{\partial y}\right)f_I(x+Iy)=0.
$$
 We say that $f$ is a right slice hyperholomorphic function
 in $\Omega$ if, for every
$I\in\mathbb{S}$, $f_I$ satisfies
$$
\frac{1}{2}\left(\frac{\partial }{\partial x}f_I(x+Iy)+\frac{\partial
}{\partial y}f_I(x+Iy) I\right)=0.
$$
\end{defn}
The set of  slice hyperholomorphic functions on $\Omega$ will be
denoted by $\mathcal R (\Omega)$. It is a right linear space on
$\mathbb H$.\\
Slice hyperholomorphic functions possess good
properties when they are defined on the so called axially
symmetric  slice domains defined below.
\begin{defn}
Let $\Omega$ be a domain in $\mathbb{H}$.
We say that $\Omega$ is a
slice domain (s-domain for short) if $\Omega \cap \mathbb{R}$ is non empty and if
$\Omega\cap \mathbb{C}_I$ is a domain in $\mathbb{C}_I$ for all $I \in \mathbb{S}$.
We say that $\Omega$ is
axially symmetric if, for all $q \in \Omega$, the
sphere $[q]$ is contained in $\Omega$.
\end{defn}
A function $f$ slice hyperholomorphic on an axially symmetric s-domain $\Omega$ is determined by its restriction to any complex plane $\mathbb C_I$, see \cite[Theorem 4.3.2]{MR2752913}.
\begin{thm}[Structure formula]
Let $\Omega\subseteq\mathbb H$ be an axially symmetric s-domain, and let $f\in\mathcal R(\Omega)$. Then for any $x+Jy\in\Omega$ the following formula holds
\begin{equation}\label{repr}
f(x+J y)=\frac 12 \left[ f(x+Iy) +f(x-Iy) + J I
(f(x-Iy)-f(x+Iy))\right].
\end{equation}
\end{thm}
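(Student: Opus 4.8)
The plan is to verify \eqref{repr} by showing that its right-hand side defines a slice hyperholomorphic function on $\Omega$ which agrees with $f$ on a single complex plane, and then to invoke the uniqueness statement recalled just above the theorem (that a slice hyperholomorphic function on an axially symmetric s-domain is determined by its restriction to any $\mathbb C_I$). Concretely, fix $I\in\mathbb S$ and define
\[
g(x+Jy)=\frac12\left[f(x+Iy)+f(x-Iy)+JI\bigl(f(x-Iy)-f(x+Iy)\bigr)\right]
\]
for $x+Jy\in\Omega$. The first point is that $g$ is well defined: since $\Omega$ is axially symmetric, $x+Jy\in\Omega$ forces the whole sphere $[x+Jy]$ into $\Omega$, so in particular $x+Iy$ and $x-Iy=x+(-I)y$ lie in $\Omega$ and the formula makes sense; moreover $g$ is real differentiable because $f$ is and the maps $y\mapsto\pm y$ are smooth.

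Next I would check that $g$ coincides with $f$ on $\mathbb C_I$. Setting $J=I$ and using $I^2=-1$ collapses the bracket to $\tfrac12[f(x+Iy)+f(x-Iy)-(f(x-Iy)-f(x+Iy))]=f(x+Iy)$, while setting $J=-I$ (so that $(-I)I=1$) gives $f(x-Iy)$. Together with the real axis, this shows $g|_{\mathbb C_I}=f|_{\mathbb C_I}$.

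The main work is to prove $g\in\mathcal R(\Omega)$. Writing $\phi(x,y)=f(x+Iy)$ and $\psi(x,y)=f(x-Iy)$, left slice hyperholomorphy of $f$ gives $\partial_x\phi=-I\partial_y\phi$, and since $\psi(x,y)=\phi(x,-y)$ a chain-rule computation in the variable $y$ yields $\partial_x\psi=I\partial_y\psi$. Setting $A=\tfrac12(\phi+\psi)$ and $B=\tfrac12(\psi-\phi)$, these two identities combine into the cross-relations $\partial_xA=I\partial_yB$ and $\partial_xB=I\partial_yA$. Since $g(x+Jy)=A+JI\,B$ with $A,B$ independent of $J$, applying the operator and using $J^2=-1$ gives
\[
\left(\frac{\partial}{\partial x}+J\frac{\partial}{\partial y}\right)g
=\partial_xA+J\partial_yA+JI\,\partial_xB-I\,\partial_yB,
\]
and substituting the cross-relations makes the terms cancel in pairs (the $\pm I\partial_yB$ terms and the $\pm J\partial_yA$ terms), so the expression vanishes for every $J\in\mathbb S$; hence $g$ is slice hyperholomorphic.

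Finally, $f$ and $g$ are both slice hyperholomorphic on the axially symmetric s-domain $\Omega$ and agree on $\mathbb C_I\cap\Omega$, so by the cited uniqueness statement they coincide on all of $\Omega$, which is exactly \eqref{repr}. I expect the only delicate point to be the derivation of the two cross-relations for $A$ and $B$ from the Cauchy--Riemann equations for $\phi$ and $\psi$, in particular getting the signs right through the reflection $y\mapsto-y$; everything else is bookkeeping, with axial symmetry doing the essential job of guaranteeing that $g$ is defined on the whole domain.
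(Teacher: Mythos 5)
Your proof is correct and is essentially the standard argument for the Representation Formula; the paper itself offers no proof of this statement (it imports it from the reference cited just above, Theorem 4.3.2 of the Colombo--Sabadini--Struppa book), and your computation — the cross-relations $\partial_x A=I\partial_y B$, $\partial_x B=I\partial_y A$ and the pairwise cancellation after applying $\partial_x+J\partial_y$ to $A+JIB$, together with the evaluation at $J=\pm I$ — checks out. The only point to phrase carefully is the final uniqueness step: the sentence you quote from the paper (``determined by its restriction to any $\mathbb{C}_I$'') is attributed there to the very theorem you are proving, so to avoid circularity you should ground it instead in the Identity Principle for slice hyperholomorphic functions on s-domains (via the Splitting Lemma, $f-g$ is holomorphic on each slice and vanishes on the nonempty open set $\Omega\cap\mathbb{R}$, hence on every $\Omega\cap\mathbb{C}_J$), which is a prior result independent of the structure formula.
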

As a consequence of this result, we have the following definition:
 \begin{defn}
 Let $\Omega$ be an axially symmetric s-domain.
  Let $h:\ \Omega\cap \mathbb{C}_I\to \mathbb H$ be a holomorphic map. Then it admits a (unique) left slice hyperholomorphic extension ${\rm ext}(h): \ \Omega \to\mathbb H$ defined by:
\begin{equation}\label{ext}
{\rm ext}(h)(x+J y)=\frac 12 \left[ h(x+Iy) +h(x-Iy) + J I
(h(x-Iy)-h(x+Iy))\right].
\end{equation}
\end{defn}
\begin{rem}{\rm
Let $\Omega\subseteq\mathbb H$ be an axially symmetric s-domain and let
$f,g\in\mathcal R(\Omega)$. We can define a suitable product, called the
$\star$-product, such that the resulting function $f\star g$ is slice
hyperholomorphic. We first define a product between the restrictions $f_I$,
$g_I$ of $f$, $g$ to $\Omega\cap\mathbb C_I$. This product can be extended to the whole $\Omega$ using formula (\ref{ext}). Outside the spheres associated with the zeroes of $f\in\mathcal R(\Omega)$ we can consider its slice regular inverse $f^{-\star}$. Note also
that $(f\star g)^{-\star}=g^{-\star}\star f^{-\star}$ where it is defined.
We refer the reader to \cite[p. 125-129]{MR2752913}  for the details on the
$\star$-product and $\star$-inverse.
}
\end{rem}
The  $\star$-product can be related to the pointwise product as described in
the following result, \cite[Proposition
4.3.22]{MR2752913}:
\begin{prop}\label{pointwiseproduct}
Let $\Omega \subseteq \mathbb{H}$ be an axially symmetric s-domain,
 $f, g : \Omega \to \mathbb{H}$ be slice hyperholomorphic functions. Then
\begin{equation}
\label{productfg}
(f \star g)(p) = f(p) g(f(p)^{-1}pf(p)),
\end{equation}
for all $p\in \Omega$, $f(p)\not=0$, while $(f \star g)(p) = 0$ when $p\in \Omega$, $f(p)=0$.
\end{prop}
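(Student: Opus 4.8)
The plan is to reduce the identity to the slice $\mathbb C_I$ on which the $\star$-product is defined, and to verify it pointwise there. The quickest route to the right-hand side is through power series. Working on a ball $B(\alpha,R)\subseteq\Omega$ centered at a point $\alpha\in\Omega\cap\mathbb R$ (such a point exists since $\Omega$ is an s-domain), both functions expand as $f(p)=\sum_{n\ge0}(p-\alpha)^n a_n$ and $g(p)=\sum_{m\ge0}(p-\alpha)^m b_m$ with $a_n,b_m\in\mathbb H$, and there the $\star$-product is the Cauchy product $(f\star g)(p)=\sum_{k\ge0}(p-\alpha)^k\sum_{n=0}^k a_n b_{k-n}$. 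Because both series converge absolutely on $B(\alpha,R)$, I may reindex freely and collect powers of $(p-\alpha)$ from the left, obtaining the intermediate identity
\[
(f\star g)(p)=\sum_{m\ge0}(p-\alpha)^m\,f(p)\,b_m .
\]

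This single identity yields both assertions on $B(\alpha,R)$. If $f(p)=0$ every summand vanishes, so $(f\star g)(p)=0$. If $f(p)\ne0$, set $q=f(p)$; since $\alpha$ is real it commutes with $q$, so $(p-\alpha)^m q=q\,\big(q^{-1}(p-\alpha)q\big)^m=q\,\big(q^{-1}pq-\alpha\big)^m$, and because $q^{-1}pq\in[p]$ has the same modulus as $p$ it again lies in $B(\alpha,R)$, where $g$ is given by the same series. Factoring $q$ out on the left thus gives $(f\star g)(p)=q\sum_m(q^{-1}pq-\alpha)^m b_m=f(p)\,g\big(f(p)^{-1}pf(p)\big)$, the desired formula.

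The remaining---and genuinely harder---point is to promote this from balls about the real axis to an arbitrary axially symmetric s-domain, since a slice hyperholomorphic function has no power series about a non-real point and the right-hand side is not visibly slice hyperholomorphic in $p$. I would therefore abandon continuation and instead verify the formula directly on each slice. Fixing $I$ and a unit $J\in\mathbb S$ with $J\perp I$, I apply the splitting lemma to write $f_I=F_1+F_2J$ and $g_I=G_1+G_2J$ with $F_\bullet,G_\bullet\colon\Omega\cap\mathbb C_I\to\mathbb C_I$ holomorphic, and I expand the right-hand side at $p=x+Iy$ using the structure formula \eqref{repr} to express $g\big(f(p)^{-1}pf(p)\big)=g(x+\tilde I y)$, with $\tilde I=f(p)^{-1}If(p)$, through the slice values $g(x\pm Iy)$. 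The crux is the algebraic collapse of the conjugation twist: writing $q=f(p)$, the relations $q\tilde I=Iq$ and $IqI=-F_1+F_2J$ make the two spherical terms recombine into precisely $F_1G_1-F_2\overline{G_2(\bar z)}+\big(F_1G_2+F_2\overline{G_1(\bar z)}\big)J$, which is the defining expression of $(f_I\star g_I)$ on the slice. This reconciliation---showing that the seemingly non-holomorphic factors $f(p)^{-1}$ and $f(p)^{-1}If(p)$ cancel to leave exactly the slicewise $\star$-product---is the main obstacle; once it is carried out the identity holds at every point of $\Omega$, and the case $f(p)=0$ is the specialization $F_1=F_2=0$.
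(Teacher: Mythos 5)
Your argument is correct, but note that the paper itself offers no proof of this proposition: it is quoted verbatim from \cite[Proposition 4.3.22]{MR2752913}, so there is no in-paper argument to compare against. Measured against the standard proof in that reference, your first step (the Cauchy-product rearrangement $(f\star g)(p)=\sum_m (p-\alpha)^m f(p)b_m$, then factoring $f(p)$ out on the left using the conjugation $(p-\alpha)^m q=q\,(q^{-1}pq-\alpha)^m$) is exactly the classical computation, and it is sound as far as it goes; its limitation, which you correctly identify, is that it only covers balls centered on the real axis. Your second step is the genuinely independent contribution: a pointwise verification on the slice $\mathbb C_{I_p}$ via the splitting lemma and the structure formula \eqref{repr}. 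I checked the algebra you only sketch: with $q=f(p)$, $\tilde I=q^{-1}Iq$, the representation formula gives
\[
q\,g(x+\tilde Iy)=\tfrac12\bigl[q\bigl(g(x+Iy)+g(x-Iy)\bigr)+IqI\bigl(g(x-Iy)-g(x+Iy)\bigr)\bigr],
\]
and with $IqI=-F_1+F_2J$ the cross terms cancel to leave $F_1\bigl(G_1(z)+G_2(z)J\bigr)+F_2J\bigl(G_1(\bar z)+G_2(\bar z)J\bigr)=F_1G_1-F_2\overline{G_2(\bar z)}+\bigl(F_1G_2+F_2\overline{G_1(\bar z)}\bigr)J$, which is indeed the splitting-lemma expression for $f_I\star g_I$; the case $f(p)=0$ is the specialization $F_1(z)=F_2(z)=0$ of the same formula. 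So the "crux" you flag does collapse as claimed, and this second argument alone proves the proposition on any axially symmetric s-domain (making the power-series step redundant, though it supplies the intuition). The only ingredient you use tacitly is that $(f\star g)\vert_{\mathbb C_{I_p}}=f_{I_p}\star g_{I_p}$, i.e.\ the independence of the $\star$-product of the slice used to define it; this is part of the construction recalled in the remark preceding the proposition, but it deserves an explicit mention since your verification is carried out slice by slice.
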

An immediate consequence is the following:
\begin{cor}\label{zeroes}
If $(f\star g)(p)=0$ then either $f(p)=0$ or $f(p)\not=0$ and $g(f(p)^{-1}pf(p))$ $=0$.
\end{cor}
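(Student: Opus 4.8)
The plan is to read the corollary off directly from Proposition~\ref{pointwiseproduct} by a case split according to whether $f(p)$ vanishes; the only substantive ingredient is that $\mathbb H$ is a division algebra and hence has no zero divisors.

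First I would assume $(f\star g)(p)=0$ and dispose of the case $f(p)=0$: here the first alternative in the statement holds trivially and there is nothing further to check.

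In the remaining case $f(p)\neq 0$, I would invoke the first identity in Proposition~\ref{pointwiseproduct} to obtain
\[
0=(f\star g)(p)=f(p)\,g\bigl(f(p)^{-1}pf(p)\bigr).
\]
Since $f(p)\neq 0$ is invertible in $\mathbb H$ and $\mathbb H$ contains no zero divisors, left multiplication by $f(p)^{-1}$ yields $g\bigl(f(p)^{-1}pf(p)\bigr)=0$, which is precisely the second alternative.

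I expect no real obstacle, as the corollary is a direct logical unpacking of the proposition. The one point worth recording is that the argument $f(p)^{-1}pf(p)$ of $g$ genuinely lies in the domain $\Omega$ on which $g$ is defined: conjugation by the nonzero quaternion $f(p)$ preserves both the real part and the modulus, so $f(p)^{-1}pf(p)$ sits on the sphere $[p]$, which is contained in $\Omega$ because $\Omega$ is axially symmetric. With this observation the two cases together give the claim.
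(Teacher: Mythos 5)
Your proof is correct and matches the paper's treatment: the paper presents this corollary as an immediate consequence of Proposition~\ref{pointwiseproduct}, and your case split plus the absence of zero divisors in $\mathbb H$ is exactly that deduction. Your added observation that $f(p)^{-1}pf(p)$ lies on the sphere $[p]\subseteq\Omega$ is a correct and worthwhile detail that the paper leaves implicit.
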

\begin{rem}\label{zeromult}{\rm
Corollary \ref{zeroes} applies in particular to polynomials, allowing to recover a well known result, see \cite{Lam_book}: if a polynomial $\mathcal{Q}(p)$ factors as
\begin{equation}\label{factor}
\mathcal{Q}(p)=(p-\alpha_1)\star \ldots \star (p-\alpha_n),\quad \alpha_{j+1}\not=\bar{\alpha}_{j}, j=1,\ldots, n-1
 \end{equation}
   then $\alpha_1$ is a root of $\mathcal{Q}(p)$ while all the other zeroes $\tilde\alpha_j$, $j=2,\ldots, n$ belong to the spheres $[\alpha_j]$, $j=2,\ldots, n$. The decomposition of the polynomial $\mathcal Q$, in general, is not unique.\\
    Note that when $\alpha_{j+1}=\bar{\alpha}_{j}$ then $\mathcal{Q}(p)$ contains the second degree factor $p^2+2{\rm Re}(\alpha_j)p+|\alpha_j|^2$ and the zero set of  $\mathcal{Q}(p)$ contains the whole sphere $[\alpha_j]$. We will say that $[\alpha_j]$ is a spherical zero of the polynomial $\mathcal{Q}$.
}
\end{rem}
\begin{rem}{\rm
Assume that $\mathcal Q(p)$ factors as in (\ref{factor}) and assume that $\alpha_j\in [\alpha_1]$ for all $j=2,\ldots, n$. Then the only root of $\mathcal{Q}(p)$ is $p=\alpha_1$, see \cite[Lemma 2.2.11]{PhDPereira}, \cite[p. 519]{MR2205693}  the decomposition in linear factors is unique, and $\alpha_1$ is the only root of $\mathcal{Q}$. \\
Assume that $[\alpha_j]$ is a spherical zero. Then, for any $a_j\in [\alpha_j]$ we have
$$
p^2+2{\rm Re}(\alpha_j)p+|\alpha_j|^2=(p-a_j)\star (p-\bar a_j)=(p-\bar a_j)\star (p-a_j)
$$
thus showing that both $a_j$ and $\bar a_j$ are zeroes of multiplicity 1. So we can say that the (points of the) sphere $[\alpha_j]$ have multiplicity $1$. Thus the multiplicity of a spherical zero $[\alpha_j]$ equals the exponent of  $p^2+2{\rm Re}(\alpha_j)p+|\alpha_j|^2$ in a factorization of $\mathcal{Q}(p)$.\\
}
\end{rem}
The discussion in the previous remark justifies the following:
\begin{defn}\label{mult}
Let
$$
\mathcal{Q}(p)=(p-\alpha_1)\star \ldots \star (p-\alpha_n),\quad \alpha_{j+1}\not=\bar{\alpha}_{j},\ \  j=1,\ldots, n-1.
$$
We say that $\alpha_1$ is a zero of $\mathcal Q$ of {\em multiplicity} $1$ if $\alpha_j\not\in [\alpha_1]$ for $j=2,\ldots, n$.\\
We say that $\alpha_1$ is a zero of $\mathcal Q$ of {\em multiplicity} $n\geq 2$ if $\alpha_j\in [\alpha_1]$ for all $j=2,\ldots, n$.\\
Assume now that $\mathcal{Q}(p)$ contains the factor $(p^2+2{\rm Re}(\alpha_j)p+|\alpha_j|^2)$ and $[\alpha_j]$ is a zero of $\mathcal Q(p)$. We say that the {\em multiplicity of the spherical zero} $[\alpha_j]$ is $m_j$ if  $m_j$ is the maximum of the integers $m$ such that $(p^2+2{\rm Re}(\alpha_j)p+|\alpha_j|^2)^{m}$ divides $\mathcal{Q}(p)$.
\end{defn}
Note that the notion of multiplicity of a spherical zero given in \cite{gszero} is different since, under the same conditions described in Definition \ref{mult}, it would be $2m_j$.
\begin{rem}
The polynomial $\mathcal{Q}(p)$ can be factored as follows, see e.g. \cite[Theorem 2.1]{gszero}:
\[
\mathcal{Q}(p)=\prod_{j=1}^r (p^2+2{\rm Re}(\alpha_j)p+|\alpha_j|^2)^{m_j}  \left( \prod_{i=1}^{\star s_{\ }} \prod_{j=1}^{\star n_i} \ (p- \alpha_{ij})\right) a,
\]
where $\displaystyle\prod^\star$ denotes the $\star$-product of the factors, $[\alpha_i]\not=[\alpha_j]$ for $i\not=j$, $\alpha_{ij}\in [a_i]$ for all $j=1,\ldots, n_i$ and $[a_{i}]\not=[a_{\ell}]$ for $i\not=\ell$.
Note that $$\deg(\mathcal{Q})=\sum_{j=1}^r 2m_j+\sum_{i=1}^s n_i.$$
\end{rem}
\begin{defn}
Let $a\in\mathbb{H}$, $|a|<1$. The function
\begin{equation}
\label{eqBlaschke}
B_a(p)=(1-p\bar a)^{-\star}\star(a-p)\frac{\bar a}{|a|}
\end{equation}
is called a Blaschke factor at $a$.
 \end{defn}
 \begin{rem}
 Using Proposition
 \ref{pointwiseproduct}, $B_a(p)$ can be rewritten as
$$
 B_a(p)= (1-\tilde{p}\bar a)^{-1}(a-\tilde p)\frac{\bar a}{|a|}
 $$
 where  $\tilde p=(1-p a)^{-1} p (1-p a)$.
 \end{rem}
  The following result is immediate, see \cite{MR3127378}:
 \begin{prop}
 Let $a\in\mathbb{H}$, $|a|<1$. The Blaschke factor $B_a$ is a slice hyperholomorphic function in $\mathbb B$.
 \end{prop}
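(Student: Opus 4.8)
The plan is to build $B_a$ from slice hyperholomorphic constituents, exploiting that both the $\star$-product and the $\star$-inverse preserve slice hyperholomorphy, and then to verify that the only possible obstruction — the zero sphere of the factor $1-p\bar a$ that is being $\star$-inverted — lies entirely outside the closed unit ball. First I would observe that $a-p$ and $1-p\bar a$ are polynomials in $p$ with quaternionic coefficients written on the right, hence belong to $\mathcal R(\mathbb H)$ and in particular are slice hyperholomorphic on $\mathbb B$. The trailing factor $\bar a/|a|$ is a constant quaternion of modulus $1$, and right multiplication by a constant preserves left slice hyperholomorphy: for the restriction $f_I$ to $\mathbb C_I$ one has $(\partial_x+I\partial_y)(f_I c)=\big((\partial_x+I\partial_y)f_I\big)c$ when $c$ is constant, so the defining equation is preserved. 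By the Remark on the $\star$-product, the $\star$-product of slice hyperholomorphic functions is slice hyperholomorphic, and $(1-p\bar a)^{-\star}$ is slice hyperholomorphic away from the spheres associated with the zeros of $1-p\bar a$; thus it remains only to locate those spheres.

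The key step is therefore to show that the zeros of $1-p\bar a$ sit outside $\overline{\mathbb B}$. Solving $p\bar a=1$ directly gives the single zero $p_0=\bar a^{-1}=a/|a|^2$, of modulus $1/|a|>1$. Equivalently, one may form the symmetrization $(1-p\bar a)\star(1-pa)=1-2\,{\rm Re}(a)\,p+|a|^2p^2$, a polynomial with real coefficients whose discriminant $4\,{\rm Re}(a)^2-4|a|^2$ is nonpositive; its complex roots form a conjugate pair of modulus $1/|a|$, so its quaternionic zero set is the sphere $[p_0]\subset\{\,|p|=1/|a|\,\}$. In either description every point of the relevant sphere has modulus $1/|a|>1$, hence the sphere is disjoint from $\overline{\mathbb B}$. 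Consequently $(1-p\bar a)^{-\star}$ is defined and slice hyperholomorphic throughout $\mathbb B$.

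Assembling the pieces, $(1-p\bar a)^{-\star}\star(a-p)$ is a $\star$-product of functions slice hyperholomorphic on $\mathbb B$, hence slice hyperholomorphic there, and right multiplication by the constant $\bar a/|a|$ keeps this property, so $B_a\in\mathcal R(\mathbb B)$. The only genuine obstacle in this argument is the zero-location step: one must be certain that inverting $1-p\bar a$ introduces no singularity inside the ball, and this is precisely the computation — via the real symmetrization — showing that the zero sphere lies at radius $1/|a|>1$. Everything else reduces to invoking the already-recorded stability of $\mathcal R(\Omega)$ under $\star$-multiplication, $\star$-inversion, and right scalar multiplication.
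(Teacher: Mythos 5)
Your proof is correct. The paper itself offers no argument here --- it declares the result ``immediate'' and points to \cite{MR3127378} --- and your write-up supplies exactly the details that make it immediate: stability of slice hyperholomorphy under $\star$-products, $\star$-inversion off the zero spheres, and right multiplication by the constant $\bar a/|a|$, together with the one substantive computation that the symmetrization $1-2\,{\rm Re}(a)p+|a|^2p^2$ vanishes only on the sphere $[\bar a^{-1}]$ of radius $1/|a|>1$, hence outside $\overline{\mathbb B}$. (As in the paper's own usage, the factor $\bar a/|a|$ tacitly assumes $a\not=0$.)
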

 As one expects, $B_a(p)$ has only one zero at $p=a$ and
 analogously to what happens in the case of the zeroes of a function, the product of two Blaschke factors of the form $B_a(p)\star B_{\bar a}(p)$ gives the Blaschke factor with zeroes at the sphere $[a]$. Thus we give the following definition:
\begin{defn}
Let $a\in\mathbb{H}$, $|a|<1$. The function
\begin{equation}
\label{blas_sph}
B_{[a]}(p)=(1-2{\rm Re}(a)p+p^2|a|^2)^{-1}(|a|^2-2{\rm Re}(a)p+p^2)
\end{equation}
is called  Blaschke factor at the sphere $[a]$.
 \end{defn}
 Theorem 5.16 in \cite{MR3127378} assigns a Blaschke product having zeroes at a given set of points $a_j$ with multiplicities $n_j$, $j\geq 1$ and at spheres $[c_i]$ with multiplicities $m_i$, $i\geq 1$, where the multiplicities are meant as exponents of the factors $(p-a_j)$ and $(p^2-{\rm Re}(a_j) p+|a_j|^2)$, respectively. In view of Definition \ref{mult}, the polynomial $(p-a_j)^{\star n_j}$ is not the unique polynomial having a zero at $a_j$ with the given multiplicity $n_j$, thus the Blaschke product $\prod^{\star n_j}_{j=1} B_{a_j}$ is not the unique Blaschke product having zero at $a_j$ with multiplicity $n_j$.\\
  We give below a form of Theorem  5.16 in \cite{MR3127378} in which we use the notion of multiplicity in Definition \ref{mult}:
 \begin{thm}\label{blaschke zeros}
A Blaschke product having zeroes at the set
 $$
 Z=\{(a_1,n_1),  \ldots, ([c_1],m_1), \ldots \}
 $$
 where $a_j\in \mathbb{B}$, $a_j$ have respective multiplicities $n_j\geq 1$, $a_j\not=0$ for $j=1,2,\ldots $, $[a_i]\not=[a_j]$ if $i\not=j$, $c_i\in \mathbb{B}$, the spheres $[c_j]$ have respective multiplicities $m_j\geq 1$,
 $j=1,2,\ldots$, $[c_i]\not=[c_j]$ if $i\not=j$
and
\begin{equation}\label{conditionconvergence}
\sum_{i,j\geq 1} \Big(n_i (1-|a_i|)+
2 m_j(1-|c_j|)\Big)<\infty
\end{equation}
is of the form
\[
\prod_{i\geq 1} (B_{[c_i]}(p))^{m_i}\prod_{i\geq 1}^\star \prod_{\ j=1}^{\star  n_i} (B_{\alpha_{ij}}(p)),
\]
where $n_j\geq 1$, $\alpha_{11} = a_1$ and $\alpha_{ij}$ are suitable elements in $[a_i]$ for $j=2,3,\ldots$.
\end{thm}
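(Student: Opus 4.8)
The plan is to obtain the statement as a reformulation of Theorem 5.16 of \cite{MR3127378}, reconciling the exponent-based notion of multiplicity used there with the one introduced in Definition \ref{mult}. I would first analyse the two kinds of factors separately, and only afterwards address the convergence of the infinite product.

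For a sphere $[c_i]$ carrying multiplicity $m_i$, I would read off from \eqref{blas_sph} that the numerator of $B_{[c_i]}(p)$ is the real quadratic $p^2-2{\rm Re}(c_i)p+|c_i|^2$, whose zero set is exactly $[c_i]$. Hence $\bigl(B_{[c_i]}(p)\bigr)^{m_i}$ has $[c_i]$ as a spherical zero, and since this quadratic appears with exponent $m_i$, the spherical zero has multiplicity $m_i$ in the sense of Definition \ref{mult}. For an isolated point $a_i$ carrying multiplicity $n_i$, I would instead form the $\star$-product $\prod_{j=1}^{\star n_i}B_{\alpha_{ij}}$, with $\alpha_{i1}=a_i$ and the remaining $\alpha_{ij}$ chosen in $[a_i]$ so that $\alpha_{i,j+1}\neq\overline{\alpha_{ij}}$. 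By Corollary \ref{zeroes} together with the discussion preceding Definition \ref{mult}, such a $\star$-product of Blaschke factors all centred on the single sphere $[a_i]$ has its only zero at $a_i=\alpha_{i1}$, of multiplicity $n_i$; the normalisation $\alpha_{i1}=a_i$ is precisely what forces the genuine zero to sit at the prescribed point rather than elsewhere on $[a_i]$.

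Next I would assemble the full product and identify its zeros. Since the spheres $[a_i]$ are pairwise distinct and distinct from the $[c_j]$, the blocks do not interfere: applying Corollary \ref{zeroes} inductively, and using that the map $p\mapsto f(p)^{-1}pf(p)$ of Proposition \ref{pointwiseproduct} sends each $p$ into its own sphere $[p]$, one sees that every zero of the $\star$-product lies on one of the prescribed spheres, after which the analysis of the individual blocks shows the zero set is exactly $Z$ with the stated multiplicities. For convergence I would rely on Theorem 5.16 of \cite{MR3127378}: on any compact subset of $\mathbb{B}$ one has estimates of the form $\|1-B_{\alpha}(p)\|\le C(1-|\alpha|)$ and $\|1-B_{[c]}(p)\|\le C(1-|c|)$, and since these depend only on the moduli $|\alpha|$ and $|c|$, while every $\alpha_{ij}\in[a_i]$ satisfies $|\alpha_{ij}|=|a_i|$, replacing the exponent-based factors by the more general $\star$-products leaves the estimates untouched. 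Thus the product converges uniformly on compacta precisely under \eqref{conditionconvergence}, the factor $2$ on the spherical terms reflecting that each $B_{[c_j]}$ is of degree two.

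The step I expect to be the main obstacle is the passage to the limit: ensuring that the infinite $\star$-product creates no spurious zeros and that each prescribed zero is attained with exactly its multiplicity. This is where I would lean most heavily on Theorem 5.16 of \cite{MR3127378}, which already secures convergence and the correct zero count for the exponent-based product; the genuinely new input, that the more flexible products $\prod_{j=1}^{\star n_i}B_{\alpha_{ij}}$ realise the same multiplicity $n_i$, is a finite statement about linear factorisations within a single sphere and follows from the structural remarks surrounding Definition \ref{mult}.
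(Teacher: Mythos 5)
There is a genuine gap, and it sits exactly at the point you flag as delicate: the placement of the zeros \emph{within} each sphere once the blocks are $\star$-multiplied together. You normalize $\alpha_{i1}=a_i$ for every $i$, and justify this by saying that this normalisation "forces the genuine zero to sit at the prescribed point." That is true for a single block in isolation, but it fails for the second and subsequent blocks in the $\star$-product. By Proposition \ref{pointwiseproduct}, if $B_{r-1}$ denotes the $\star$-product of the first $r-1$ point blocks, then
\[
\bigl(B_{r-1}\star B_{\alpha_{r1}}\star\cdots\bigr)(p)=B_{r-1}(p)\,\bigl(B_{\alpha_{r1}}\star\cdots\bigr)\bigl(B_{r-1}(p)^{-1}pB_{r-1}(p)\bigr),
\]
so the product vanishes at $p=a_r$ if and only if the $r$-th block vanishes at the conjugated point $B_{r-1}(a_r)^{-1}a_rB_{r-1}(a_r)$, which lies on $[a_r]$ but is in general different from $a_r$. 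Hence one must take $\alpha_{r1}=B_{r-1}(a_r)^{-1}a_rB_{r-1}(a_r)$ for $r\geq 2$, not $\alpha_{r1}=a_r$; with your choice the resulting product has its zeros on the correct spheres with the correct multiplicities, but at the wrong points of those spheres (except on $[a_1]$). This is precisely why the theorem asserts only $\alpha_{11}=a_1$ and leaves the remaining $\alpha_{ij}$ as "suitable elements" of $[a_i]$, and it is the main content of the paper's induction on $r$. Your own observation that the map $p\mapsto f(p)^{-1}pf(p)$ sends $p$ into its own sphere should have been the warning sign: it guarantees the sphere is right, not the point.

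The rest of your argument is essentially the paper's: the spherical factors are quotients of real quadratics, are handled pointwise, and do not displace anything; a single block $\prod_{j=1}^{\star n_i}B_{\alpha_{ij}}$ with all $\alpha_{ij}\in[a_i]$ has one zero of multiplicity $n_i$ in the sense of Definition \ref{mult}; and convergence is delegated to \cite{MR3127378} (the paper cites Theorem 5.6 there for convergence, not Theorem 5.16, which is the exponent-based version of the present statement). Once you correct the recursive choice of $\alpha_{r1}$ (and, for $j\geq 2$, similarly propagate the conjugation through the partial products within each block), the proof closes along the lines you describe.
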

\begin{proof}
The fact that (\ref{conditionconvergence}) ensure the convergence of the product follows from \cite[Theorem 5.6]{MR3127378}. The zeroes of the pointwise product $\prod_{i\geq 1} (B_{[c_i]}(p))^{m_i}$ correspond to the given spheres with their multiplicities. Let us consider the product:
\[
\prod_{i=1}^{\star  n_1} (B_{\alpha_{i1}}(p))=B_{\alpha_{11}}(p)\star B_{\alpha_{12}}(p) \star \ldots \star B_{\alpha_{1 n_1}}(p).
\]
As we already observed in the proof of Proposition 5.10 in \cite{MR3127378} this product admits a zero at the point $\alpha_{11}=a_1$ and it is a zero of multiplicity $1$ if $n_1=1$; if $n_1\geq 2$, the other zeroes are $\tilde\alpha_{12}, \ldots, \tilde\alpha_{1 n_1}$ where $\tilde\alpha_{1j}$ belong to the sphere $[\alpha_{1j}]=[a_1]$.
This fact can be seen directly using formula  (\ref{productfg}). Thus, according to Remark \ref{zeromult}, $a_1$ is a zero of multiplicity $n_1$.
Let us now consider $r\geq 2$ and
\begin{equation}\label{rfactor}
\prod_{j=1}^{\star  n_r} (B_{\alpha_{rj}}(p))=B_{\alpha_{r1}}(p) \star \ldots \star B_{\alpha_{r n_r}}(p),
\end{equation}
and set
\[
B_{r-1}(p):= \prod_{i\geq 1}^{\star (r-1)} \prod_{j=1}^{\star  n_i} (B_{\alpha_{ij}}(p)).
\]
Then
\[
B_{r-1}(p)\star B_{\alpha_{r1}}(p)= B_{r-1}(p) B_{\alpha_{r1}}(B_{r-1}(p)^{-1}pB_{r-1}(p))
\]
has a zero  at
 $a_r$ if and only if $B_{\alpha_{r1}}(B_{r-1}(a_r)^{-1}a_rB_{r-1}(a_r))=0$, i.e. if and only if $\alpha_{r1}=B_{r-1}(a_r)^{-1}a_rB_{r-1}(a_r)$. If $n_r=1$ then $a_r$ is a zero of multiplicity $1$ while if $n_r\geq 2$, all the other zeroes of the product (\ref{rfactor}) belongs to the sphere $[a_r]$ thus, by Remark  \ref{zeromult}, the zero $a_r$ has multiplicity $n_r$.
 This completes the proof.
\end{proof}
\begin{rem}\label{remB}{\rm In the case in which one has to construct a Blaschke product having a zero at $a_i$ with multiplicity $n_i$ by prescribing the factors $(p- a_{i1})\star\cdots \star (p-a_{i n_i})$, $a_{ij}\in [a_i]$ for all $j=1,\ldots, n_i$, the factors in the Blaschke product must be chosen accordingly (see the proof of Theorem \ref{blaschke zeros}).  }
\end{rem}
\begin{prop} The $\star$-inverse of $B_a$ and $B_{[a]}$ are $B_{\bar a^{-1}}$, $B_{[a^{-1}]}$ respectively.
\end{prop}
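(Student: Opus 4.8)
The plan is to prove the two assertions separately, since the spherical factor $B_{[a]}$ is built from polynomials with real coefficients while $B_a$ genuinely involves the noncommutative $\star$-product. Note first that $|\bar a^{-1}|=1/|a|>1$, so $B_{\bar a^{-1}}$ and $B_{[a^{-1}]}$ are to be read through the defining formulas \eqref{eqBlaschke} and \eqref{blas_sph}, which remain algebraically meaningful off the relevant zero spheres even when the base point lies outside $\mathbb{B}$.

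For the spherical factor I would observe that both the numerator $|a|^2-2{\rm Re}(a)p+p^2$ and the denominator $1-2{\rm Re}(a)p+p^2|a|^2$ of \eqref{blas_sph} have real coefficients. Such intrinsic functions take, at each $p$, a value in $\mathbb{C}_{I_p}$, which therefore commutes with $p$; hence by Proposition \ref{pointwiseproduct} the $\star$-product with them coincides with the pointwise product, $B_{[a]}$ is itself intrinsic, and its $\star$-inverse is just the pointwise inverse $(1-2{\rm Re}(a)p+p^2|a|^2)(|a|^2-2{\rm Re}(a)p+p^2)^{-1}$. It then remains a direct algebraic check: using $a^{-1}=\bar a/|a|^2$, so that ${\rm Re}(a^{-1})={\rm Re}(a)/|a|^2$ and $|a^{-1}|^2=1/|a|^2$, I would substitute into \eqref{blas_sph} written for $a^{-1}$ and clear the common factor $|a|^2$ from numerator and denominator to recognize exactly $B_{[a]}^{-1}$.

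For the point factor the key step is to absorb the unit-quaternion constant $\bar a/|a|$. Writing $u_a=\bar a/|a|$ and using $a\bar a=|a|^2$ together with $\bar a/|a|^2=a^{-1}$, a one-line computation gives $(a-p)\tfrac{\bar a}{|a|}=|a|\,(1-pa^{-1})$, so \eqref{eqBlaschke} becomes $B_a(p)=|a|\,(1-p\bar a)^{-\star}\star(1-pa^{-1})$ with $|a|$ a central real scalar. Applying the anti-multiplicativity $(f\star g)^{-\star}=g^{-\star}\star f^{-\star}$ recorded in the Remark on the $\star$-product, together with $\big((1-p\bar a)^{-\star}\big)^{-\star}=1-p\bar a$, I obtain $B_a^{-\star}(p)=\tfrac1{|a|}\,(1-pa^{-1})^{-\star}\star(1-p\bar a)$.

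Finally I would compute $B_{\bar a^{-1}}$ directly and match. Setting $b=\bar a^{-1}=a/|a|^2$, one has $\bar b=a^{-1}$ and $|b|=1/|a|$, hence $\bar b/|b|=\bar a/|a|$ and $(b-p)\tfrac{\bar b}{|b|}=\tfrac1{|a|}(1-p\bar a)$, so that $B_{\bar a^{-1}}(p)=\tfrac1{|a|}(1-pa^{-1})^{-\star}\star(1-p\bar a)$, which is precisely the expression found for $B_a^{-\star}$; the statement follows. The main obstacle is not conceptual but notational: one must carefully track the order of the $\star$-factors and the side on which the constants $\bar a/|a|$ and $|a|$ act, since the whole simplification rests on the two identities $(a-p)\tfrac{\bar a}{|a|}=|a|(1-pa^{-1})$ and $(b-p)\tfrac{\bar b}{|b|}=\tfrac1{|a|}(1-p\bar a)$, which reduce both Blaschke factors to products of the same two linear terms $1-p\bar a$ and $1-pa^{-1}$ taken in opposite orders.
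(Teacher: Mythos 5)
Your proposal is correct and is essentially the same direct-verification argument the paper has in mind: the paper simply asserts that "straightforward computations" show $B_a\star B_{\bar a^{-1}}=1$ and $B_{[a]}\star B_{[a^{-1}]}=1$, and you carry out exactly those computations (organized via the anti-multiplicativity $(f\star g)^{-\star}=g^{-\star}\star f^{-\star}$ and the identities $(a-p)\tfrac{\bar a}{|a|}=|a|(1-pa^{-1})$, $(b-p)\tfrac{\bar b}{|b|}=\tfrac1{|a|}(1-p\bar a)$, plus the observation that the spherical factor is intrinsic so its $\star$-inverse is the pointwise inverse). All the algebra checks out.
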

\begin{proof} It follows from straightforward computations, by verifying that the products $B_a\star B_{\bar a^{-1}}$ and
$B_{[a]}\star B_{[a^{-1}]}$ equal $1$.
\end{proof}

\begin{defn}
A Blaschke product of the form
\begin{equation}\label{Bproduct}
B(p)=
\prod_{i=1}^r (B_{[c_i]}(p))^{m_i}\prod_{i=1}^{\star s} \prod_{\ j=1}^{\star  n_i} (B_{\alpha_{ij}}(p)),
\end{equation}
is said to have degree $d=\sum_{i=1}^r 2m_i+\sum_{j=1}^s n_j$.
\end{defn}
\begin{prop}
Let $B(p)$ be a Blaschke product as in (\ref{Bproduct}). Then $\dim(\mathcal H(B))=\deg B$.
\end{prop}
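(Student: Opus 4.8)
The plan is to argue by induction on the degree $d=\sum_{i=1}^r 2m_i+\sum_{j=1}^s n_j$, exploiting the fact that the degree is additive under the $\star$-product while proving the same additivity for $\dim\mathcal H(\cdot)$. The first observation is that the product \eqref{Bproduct} is, after regrouping, a single $\star$-product of exactly $d$ elementary Blaschke factors at points of $\mathbb B$. Indeed, each spherical factor satisfies $B_{[c_i]}=B_{c_i}\star B_{\bar c_i}$, and since $B_{[c_i]}$ is a quotient of polynomials with real coefficients, formula \eqref{productfg} shows that $B_{[c_i]}(p)^{-1}pB_{[c_i]}(p)=p$ on each slice, so its pointwise powers coincide with its $\star$-powers; hence $B_{[c_i]}^{m_i}=(B_{c_i}\star B_{\bar c_i})^{\star m_i}$ and $B=B_{\beta_1}\star\cdots\star B_{\beta_d}$ for suitable $\beta_1,\dots,\beta_d\in\mathbb B$. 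Thus it suffices to prove $\dim\mathcal H(B_{\beta_1}\star\cdots\star B_{\beta_d})=d$.

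The base case is a single factor. For $a\in\mathbb B$ I would compute the kernel $K_{B_a}$ directly, imitating the classical computation $1-b_a\overline{b_a}=(1-|a|^2)(1-z\bar w)/\big((1-\bar a z)(1-a\bar w)\big)$: a straightforward $\star$-computation using \eqref{eqBlaschke} shows that $K_{B_a}(p,q)$ is a rank-one kernel, a nonzero multiple of $k(p,a)\,\overline{k(q,a)}$ where $k$ is the Szeg\"o kernel of the Hardy space of the ball. Hence $\mathcal H(B_a)$ is spanned by a single slice hyperholomorphic function and $\dim\mathcal H(B_a)=1=\deg B_a$.

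The inductive step rests on an orthogonal decomposition. Writing $B=B_1\star B_2$ with $B_1=B_{\beta_1}$ and $B_2=B_{\beta_2}\star\cdots\star B_{\beta_d}$, I would establish the slice counterpart of the classical kernel identity
\[
K_{B_1\star B_2}(p,q)=K_{B_1}(p,q)+\big(B_1\star K_{B_2}(\cdot,q)\big)(p)\star_r\overline{B_1(q)},
\]
where $\star_r$ denotes the $\star$-product and conjugation carried out in the variable $q$; this is the exact analogue of $K_{B_1}(z,w)+B_1(z)K_{B_2}(z,w)\overline{B_1(w)}$ and follows from the algebraic identity $1-B_1\star B_2\star\overline{B_1\star B_2}=(1-B_1\star\overline{B_1})+B_1\star(1-B_2\star\overline{B_2})\star\overline{B_1}$. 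Since both terms on the right are positive definite kernels, $\mathcal H(B)$ splits as the orthogonal sum $\mathcal H(B_1)[\oplus]\,B_1\star\mathcal H(B_2)$, where $B_1\star\mathcal H(B_2)$ is the image of $\mathcal H(B_2)$ under the map $f\mapsto B_1\star f$. Because $B_1$ is a Blaschke factor, hence $\star$-inner with $\star$-inverse given by the preceding proposition, this multiplication operator is isometric, so $\dim\big(B_1\star\mathcal H(B_2)\big)=\dim\mathcal H(B_2)$. Combining with the base case and the induction hypothesis gives $\dim\mathcal H(B)=1+(d-1)=d$.

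The main obstacle is the decomposition identity itself: in the noncommutative slice setting one must track carefully the left action of $B_1$ in the $p$-variable against the right action of $\overline{B_1}$ in the $q$-variable, and verify both that the two kernels on the right-hand side are genuinely the reproducing kernels of $\mathcal H(B_1)$ and of the shifted copy $B_1\star\mathcal H(B_2)$, and that these subspaces are mutually orthogonal inside $\mathcal H(B)$. Establishing that $f\mapsto B_1\star f$ is an isometry into $\mathcal H(B)$ with range orthogonal to $\mathcal H(B_1)$ is where the representation \eqref{productfg} of the $\star$-product and the structure formula must be invoked; in the classical commutative case these verifications are immediate, but here they carry the full weight of the argument.
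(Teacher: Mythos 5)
Your proposal is correct and follows essentially the same route as the paper: rewrite $B$ as a $\star$-product of $d$ elementary factors (absorbing each spherical factor as $B_{c_i}\star B_{\bar c_i}$), use the kernel splitting $K_{B_1\star B_2}=K_{B_1}+B_1\star K_{B_2}\star_r B_1^*$ to decompose $\mathcal H(B)$, and invoke the one-dimensionality of $\mathcal H(B_a)$ together with the fact that left $\star$-multiplication by a Blaschke factor is an isometry of $\mathbf H_2(\mathbb B)$. The only cosmetic difference is that you peel off one factor at a time, whereas the paper writes out the full $d$-term sum and checks pairwise orthogonality of the explicit generators $f_j=B_{\beta_1}\star\cdots\star B_{\beta_{j-1}}\star(1-p\bar\beta_j)^{-\star}$.
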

\begin{proof}
Let us rewrite $B(p)$ as
$$
B(p)=
\prod_{i=1}^r (B_{c_i}(p)\star B_{\bar c_i}(p))^{m_i}\prod_{i=1}^{\star s} \prod_{\ j=1}^{\star  n_i} (B_{\alpha_{ij}}(p))=\prod_{j=1}^{\star d} B_{\beta_j}(p),
$$
$d=\deg B$. Let us first observe that in the case in which the
factors $B_{\beta_j}$ are such that no three of the quaternions
$\beta_j$ belong to the same sphere, then the statement follows
from the fact that $\mathcal H(B_{\beta_j})$ is the span of
$(1-p\bar \beta_j)^{-\star}$. MOreover $(1-p\bar
\beta_1)^{-\star},\ldots, (1-p\bar \beta_d)^{-\star}$ are linearly
independent in the Hardy space $\mathbf H_2(\mathbb B)$, see \cite[Remark
3.1]{abcs1}. So we now assume that $d\geq 3$ and at least three
among the $\beta_j$'s belong the same sphere. We proceed by
induction. Assume that $d=3$ and $\beta_1,\beta_2,\beta_3$ belong
to the same sphere. Since
\[
\begin{split}
K_B(p,q)&=\sum_n p^n(1-B(p)B(q)^*)\bar q^n=\sum_n p^n(1-B_{\beta_1}(p)B_{\beta_1}(q)^*)\bar q^n\\
&+B_{\beta_1}(p)\star\sum_n p^n(1-B_{\beta_2}(p)B_{\beta_2}(q)^*)\bar q^n\star_r B_{\beta_1}(q)^*\\
&+B_{\beta_1}(p)\star B_{\beta_2}(p)\star\sum_n p^n(1-B_{\beta_3}(p)B_{\beta_3}(q)^*)\bar q^n\star_r B_{\beta_1}(q)^*\star_r B_{\beta_1}(q)^*\\
\end{split}
\]
we have
\begin{equation}\label{sum}
\mathcal H(B_{\beta_1})+B_{\beta_1}\star \mathcal H(B_{\beta_2})+B_{\beta_1}\star B_{\beta_2}\star \mathcal H(B_{\beta_3}).
\end{equation}
Now note that $\mathcal H(B_{\beta_1})$ is spanned by
$f_1(p)=(1-p\bar \beta_1)^{-\star}$, $B_{\beta_1}\star \mathcal
H(B_{\beta_2})$ is spanned by $f_2(p)=B_{\beta_1}(p)\star
(1-p\bar \beta_2)^{-\star}$ and, finally, $B_{\beta_1}\star
B_{\beta_2}\star \mathcal H(B_{\beta_3})$ is spanned by
$f_3(p)=B_{\beta_1}(p)\star B_{\beta_2}(p)\star (1-p\bar
\beta_3)^{-\star}$. By using the reproducing
property of $f_1$ we have $[f_1,f_2]=0$ and $[f_1,f_3]=0$ (here
$[\cdot,\cdot]$ denotes the inner product in $\mathbf H_2(\mathbb
B)$). Observe that $$[f_2,f_3]=[ (1-p\bar \beta_2)^{-\star},
B_{\beta_2}(p)\star (1-p\bar \beta_3)^{-\star}]=0$$ since the
left multiplication by $B_{\beta_1}(p)$ is an isometry in
$\mathbf H_2(\mathbb B)$ and by the reproducing property of
$(1-p\bar \beta_2)^{-\star}$. So $f_1,f_2,f_3$ are orthogonal in
$\mathbf H_2(\mathbb B)$ and so they are linearly independent. We
conclude that the sum (\ref{sum}) is direct and has dimension
$3$. Now assume that the assertion hold when $d=n$ and there in
$B(p)$ are at least three Blaschke factors at points on the same
sphere. We show that the assertion holds for $d=n+1$. We
generalize the above discussion by considering
\begin{equation}\label{nsum}
\begin{split}
&(\mathcal H(B_{\beta_1})+B_{\beta_1}\star \mathcal H(B_{\beta_2})+\cdots+B_{\beta_1}\star \cdots \star B_{\beta_{n-1}}\star \mathcal H(B_{\beta_n})+\cdots +\\
&+ B_{\beta_1}\star \cdots \star B_{\beta_{n}}\star \mathcal H(B_{\beta_{n+1}}).\\
\end{split}
\end{equation}
Let us denote, as before, by $f_1(p)=(1-p\bar \beta_1)^{-\star}$ a generator of $\mathcal H(B_{\beta_1})$ and by
$f_j(p)=B_{\beta_1}\star \cdots \star B_{\beta_{j-1}}\star (1-p\bar \beta_{j})^{-\star}$ a generator of
$B_{\beta_1}\star \cdots \star B_{\beta_{j-1}}\star \mathcal H(B_{\beta_{j}})$, $j=1,\ldots, n+1$.
By the induction hypothesis, the sum of the first $n$ terms is direct  so we show that $[f_j,f_{n+1}]=0$ for $j=1,\ldots, n$. This follows, as before, from the fact that the multiplication by a Blaschke factor is an isometry and by the reproducing property. The statement follows.
\end{proof}
We now introduce the Blaschke factors in the half--space
$$\mathbb H_+=\{p\in\mathbb H\ :\ {\rm Re}(p)>0\}.$$
\begin{defn}
For $a\in\mathbb H_+$ set
\[
b_{a}(p)=(p+\bar a)^{-\star}\star (p-a).
\]
The function $b_a(p)$ is called Blaschke factor at $a$ in the half space $\mathbb H_+$.
\end{defn}
\begin{rem}{\rm
The function $b_a(p)$ is defined outside the sphere $[-a]$
and it has a zero at $p=a$.
A Blaschke factor $b_a$ is slice hyperholomorphic in $\mathbb H^+$.}
\end{rem}
As before, we can also introduce Blaschke factors at spheres:
\begin{defn}
For $a\in\mathbb H_+$ set
\[
b_{[a]}(p)=(p^2 +2 {\rm Re}(a)p +|a|^2)^{-1} (p^2 -2 {\rm Re}(a)p +|a|^2).
\]
The function $b_a(p)$ is called Blaschke factor at the sphere $[a]$ in the half space $\mathbb H_+$.
\end{defn}
We now state the following result
whose proof mimics the lines of the proof of Theorem \ref{blaschke zeros} with obvious changes.
Note that an analog of Remark \ref{remB} holds also in this case.

\begin{thm}
A Blaschke product having zeroes at the set
 $$
 Z=\{(a_1,n_1),  \ldots,  ([c_1],m_1),  \ldots \}
 $$
 where $a_j\in \mathbb{H}_+$, $a_j$ have respective multiplicities $n_j\geq 1$, $[a_i]\not=[a_j]$ if $i\not=j$, $c_i\in \mathbb{H}_+$, the spheres $[c_j]$ have respective multiplicities $m_j\geq 1$,
 $j=1,2,\ldots$, $[c_i]\not=[c_j]$ if $i\not=j$
and
$$
\sum_{i,j\geq 1} \Big(n_i (1-|a_i|)+
2m_j(1-|c_j|)\Big)<\infty
$$
is given by
\[
\prod_{i\geq 1} (b_{[c_i]}(p))^{m_i}\prod_{i\geq 1}^\star \prod_{j=1}^{\star n_i} (b_{\alpha_{ij}}(p)),
\]
where $\alpha_{11}=a_1$ and $\alpha_{ij}$ are suitable elements in $[a_i]$ for $i=2,3,\ldots$.
\end{thm}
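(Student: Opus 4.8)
The plan is to follow, step by step, the argument given for Theorem \ref{blaschke zeros} in the ball, replacing throughout the ball Blaschke factors $B_a$, $B_{[a]}$ by their half-space counterparts $b_a$, $b_{[a]}$, and adapting the computations to the kernel $p+\bar a$ in place of $1-p\bar a$. The proof splits into two largely independent tasks: establishing convergence of the proposed infinite product, and verifying that its zero set, counted with multiplicities, is exactly the prescribed set $Z$. For convergence I would invoke the half-space analog of \cite[Theorem 5.6]{MR3127378} under the summability hypothesis $\sum_{i,j}\bigl(n_i(1-|a_i|)+2m_j(1-|c_j|)\bigr)<\infty$; each factor $b_{\alpha_{ij}}$ and $b_{[c_i]}$ is slice hyperholomorphic on $\mathbb H_+$ away from the relevant spheres, so once convergence is secured the product defines a slice hyperholomorphic function there.

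For the zero-set computation I would treat the spherical part and the isolated part separately. The pointwise product $\prod_{i\geq 1}(b_{[c_i]}(p))^{m_i}$ contributes precisely the spheres $[c_i]$ with their multiplicities $m_i$, exactly as in the ball case, since each $b_{[c_i]}$ is a quotient of two real-coefficient polynomials in $p$ and hence behaves pointwise. For the isolated points I would argue by induction on the block index $r$. In the first block, the $\star$-product $b_{\alpha_{11}}\star\cdots\star b_{\alpha_{1n_1}}$ with $\alpha_{11}=a_1$ has, by Corollary \ref{zeroes} together with formula \eqref{productfg}, a zero at $a_1$, while its remaining zeroes lie on $[a_1]$; by Remark \ref{zeromult} this makes $a_1$ a zero of multiplicity $n_1$.

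For $r\geq 2$, writing $b_{r-1}$ for the $\star$-product of the first $r-1$ blocks, formula \eqref{productfg} shows that $b_{r-1}\star b_{\alpha_{r1}}$ vanishes at $a_r$ precisely when $\alpha_{r1}=b_{r-1}(a_r)^{-1}a_r\,b_{r-1}(a_r)$, which singles out the suitable representative $\alpha_{r1}\in[a_r]$. The remaining $\alpha_{rj}$, $j\geq 2$, are then chosen in $[a_r]$ as dictated by Remark \ref{remB}, so that the full $r$-th block contributes $a_r$ with multiplicity $n_r$; assembling all blocks yields the claimed form of the product.

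The main obstacle I anticipate is purely a matter of bookkeeping rather than of new ideas: at each inductive step one must check that $b_{r-1}(a_r)\neq 0$, so that the conjugation $b_{r-1}(a_r)^{-1}a_r\,b_{r-1}(a_r)$ is well defined and lands in $[a_r]$, and that left $\star$-multiplication by the already-constructed factors neither creates nor destroys zeroes on spheres already accounted for. Both facts hold because a Blaschke factor $b_\alpha$ vanishes only on $[\alpha]$ and the spheres $[a_i]$ are pairwise distinct and distinct from the $[c_j]$. Since the conjugation argument is then verbatim the ball-case computation with the obvious changes, no idea beyond Theorem \ref{blaschke zeros} is needed, which is exactly why the statement can be asserted as an analog.
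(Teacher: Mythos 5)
Your proposal is correct and coincides with the paper's intended argument: the paper offers no written proof for this half-space statement, saying only that it ``mimics the lines of the proof of Theorem \ref{blaschke zeros} with obvious changes,'' and your step-by-step transposition of that ball-case proof (convergence via the half-space analog of the convergence criterion, pointwise treatment of the spherical factors, and the inductive conjugation argument via formula \eqref{productfg} selecting $\alpha_{r1}=b_{r-1}(a_r)^{-1}a_r\,b_{r-1}(a_r)$) is exactly that program, carried out with somewhat more care about well-definedness than the paper bothers to record.
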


Let $f(p)=
\sum_{n=-\infty}^{+\infty} (p-p_0)^{\star n} a_n$ where $a_n\in
\mathbb H$.
Following the standard nomenclature and
\cite{MR2955794} we now give the definition of singularity of a slice regular function:
\begin{defn}
A function $f$
  has a pole at the point $p_0$ if there exists $m\geq 0$ such that $a_{-k}=0$ for $k>m$.   The minimum of such $m$ is called the order of the pole;\\
  If $p$ is not a pole then we call it an essential singularity for $f$;\\
  $f$ has a removable singularity at $p_0$ if it can be extended in a neighborhood of $p_0$ as a slice hyperholomorphic function.\\
\end{defn}
A function $f$ has a pole at $p_0$ if and only if its restriction to a complex plane has a pole. In this framework there can be poles of order $0$. To give an example, let $I\in\mathbb S$; then  the function $(p+I)^{-\star}=(p^2+1)^{-1}(p-I)$ has a pole of order $0$ at the point $-I$ which, however, is not a removable singularity, see \cite[p.55]{MR2752913}.

\begin{defn} Let $\Omega$ be an axially symmetric s-domain in $\mathbb H$. We say that a function $f:\, \Omega\to \mathbb H$ is
slice hypermeromorphic in $\Omega$ if $f$ is slice hyperholomorphic in
$\Omega'\subset \Omega$ such that every point in $\Omega\setminus\Omega'$ is a pole.
\end{defn}

\section{Some results from quaternionic functional analysis}
\setcounter{equation}{0}
The tools from quaternionic functional analysis needed in the
present paper are of two kinds. On one hand, we need some results from the
theory of quaternionic Pontryagin spaces, taken
essentially from \cite{as3}. On the other hand, we also need the quaternionic
version of the Schauder-Tychonoff theorem in order to prove an
invariant subspace theorem for contractions in Pontryagin spaces.
More generally we note that in our on-going project on Schur
analysis in the slice hyperholomorphic setting we were lead to
prove a number of results in quaternionic functional analysis not
readily available in the literature.\smallskip

Operator theory in (quaternionic) Pontryagin spaces plays an important role
in (quaternionic) Schur analysis, and we here recall some definitions
and results needed in the sequel. We refer to \cite{as3} for more information.

\begin{defn}\label{innerproduct}
Let ${\mathcal V}$ be a right quaternionic vector space. The map
\[
[\cdot, \cdot]\,\, :\,\, {\mathcal V}\times {\mathcal
V}\quad\longrightarrow\quad \mathbb H
\]
is called an inner product if it is a (right) sesquilinear form:
\[
[v_1c_1,v_2c_2]=\overline{c_2}[v_1,v_2]c_1,\quad \forall
v_1,v_2\in {\mathcal V},\,\,{and}\,\, c_1,c_2\in\mathbb H,
\]
which is Hermitian in the sense that:
\[
[v,w]=\overline{[w,v]},\quad \forall v,w\in\mathbb {\mathcal V}.
\]
\label{dn:hermi}
\end{defn}

A quaternionic inner product space ${\mathcal V}$ is called a
Pontryagin space if it can be written as a direct and orthogonal
sum
\begin{equation}
\label{decomposition1}
{\mathcal V}={\mathcal V}_+[\oplus]{\mathcal V}_-,
\end{equation}
where $({\mathcal V}_+, [\cdot,\cdot])$ is a Hilbert space, and
$({\mathcal V}_-, -[\cdot,\cdot])$ is a finite dimensional
Hilbert space.
As in the complex case, the space $\mathcal V$ endowed with the form
\begin{equation}
\label{eqnorm}
\langle h,g\rangle=[h_+,g_+]-[h_-,g_-],
\end{equation}
where $h=h_++h_-$ and $g=g_++g_-$ are the decompositions of $f,g\in\mathcal V$ along
\eqref{decomposition1}, is a Hilbert space and the norms associated with the inner
products \eqref{eqnorm}
are equivalent, and hence define the same topology. The notions of adjoint and
contraction are defined as in
the complex case, and Theorem \ref{tmtm1} still holds in the quaternionic setting:

\begin{thm}\cite[Theorem 7.2]{acs3}
\label{tmtm2}
A densely defined contraction between quaternionic
Pontryagin spaces of the same index has a unique contractive extension and its adjoint
is also a contraction.
\end{thm}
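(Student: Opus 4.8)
The plan is to transport the proof of the complex statement (Theorem~\ref{tmtm1}) to the quaternionic setting, checking at each step that only right $\mathbb H$-linearity and the Hermitian, sesquilinear properties of the quaternion-valued form are used. Fix fundamental decompositions $\mathcal P_1=\mathcal P_1^+[\oplus]\mathcal P_1^-$ and $\mathcal P_2=\mathcal P_2^+[\oplus]\mathcal P_2^-$, let $P_i^\pm$ be the associated projections, and let $\|\cdot\|_i$ be the Hilbert norm attached to the form \eqref{eqnorm}. By hypothesis the common index is $\kappa=\dim\mathcal P_1^-=\dim\mathcal P_2^-<\infty$. I would organize the argument into three claims: that $T$ is bounded for the Hilbert norms (so that it has a unique continuous extension), that the extension remains a contraction, and that the adjoint is a contraction.

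For the first two claims, write $Th=P_2^+Th+P_2^-Th$ for $h\in\mathrm{Dom}(T)$. The contraction inequality gives
\[
\|P_2^+Th\|_2^2-\|P_2^-Th\|_2^2=[Th,Th]_2\le[h,h]_1\le\|h\|_1^2,
\]
hence $\|Th\|_2^2\le\|h\|_1^2+2\,\|P_2^-Th\|_2^2$, and everything reduces to bounding the map $h\mapsto P_2^-Th$, which takes values in the $\kappa$-dimensional space $\mathcal P_2^-$. This is exactly where finiteness of the index enters: using the structural fact that a Pontryagin space of index $\kappa$ admits no negative subspace of dimension exceeding $\kappa$, one controls this finite-dimensional part, and the resulting uniform bound $\|Th\|_2\le C\|h\|_1$ on the dense domain (see \cite{as3,adrs,bognar}) yields a unique continuous extension. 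Since the scalar inequality $[Th,Th]_2\le[h,h]_1$ is preserved under Hilbert-norm limits (the form being continuous for that norm), the extension is again a contraction. I stress that the statement genuinely fails without finiteness of the negative index, so this reduction to a finite-dimensional estimate is the heart of the extension part.

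The remaining, and in my view hardest, claim is that $T^*$ is a contraction, because in an indefinite metric the inequality for $T$ does not transpose formally to $T^*$: writing $T^*=J_1T^{\#}J_2$ with $J_i$ the fundamental symmetries and $T^{\#}$ the Hilbert adjoint, contractivity of $T$ reads $T^{\#}J_2T\le J_1$, whereas contractivity of $T^*$ amounts to the dual inequality $TJ_1T^{\#}\le J_2$, and passing from one to the other is false in general Krein spaces. Here the equality of indices is decisive. The plan is to exhibit a maximal uniformly negative subspace $\mathcal N_1\subseteq\mathcal P_1$, of dimension $\kappa$, which the contraction $T$ carries onto a maximal uniformly negative subspace of $\mathcal P_2$ of the same dimension $\kappa$; the defect in negativity can neither be created nor destroyed precisely because the two indices coincide. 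On the $[\,\cdot,\cdot\,]$-orthogonal complements of these subspaces the forms are positive definite, so the adjoint estimate reduces, up to an explicit finite-dimensional correction, to the Hilbert-space fact that the adjoint of a contraction is a contraction; equivalently, one completes the block operator determined by $T$ to a unitary (Julia) operator with Hilbert defect spaces, which is possible exactly under the equal-index hypothesis, and reads off contractivity of $T^*$. Uniqueness of the extension is immediate from density together with continuity, and the quaternionic adjoint $T^*$ is the one furnished by the Riesz representation theorem recalled before the statement.
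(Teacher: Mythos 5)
The paper offers no proof of Theorem \ref{tmtm2}: the statement is imported verbatim from \cite[Theorem 7.2]{acs3} and is only recorded here for later use, so there is no in-paper argument to measure your proposal against; I can only assess it on its own terms. Your outline follows the classical Iohvidov--Krein--Langer / Alpay--Dijksma--Rovnyak--de Snoo route, and it correctly isolates the two places where equality of the negative indices is indispensable. But at precisely those two places the write-up asserts rather than proves, so as it stands it is a plan, not a proof.

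Concretely. (1) \emph{Boundedness.} After the (correct) reduction $\|Th\|_2^2\le\|h\|_1^2+2\|P_2^-Th\|_2^2$, you claim the map $h\mapsto P_2^-Th$ is controlled ``because $\mathcal P_2^-$ is $\kappa$-dimensional and no negative subspace has dimension exceeding $\kappa$.'' Finite-dimensionality of the target does not bound a linear map, and with the fundamental decomposition of $\mathcal P_2$ fixed in advance there is no direct estimate on $P_2^-T$. The standard argument instead produces a $\kappa$-dimensional uniformly negative subspace $\mathcal N\subseteq{\rm Dom}\,(T)$ (this uses density of the domain and is itself a nontrivial step), notes that $[Th,Th]_2\le[h,h]_1<0$ forces $T|_{\mathcal N}$ to be injective with $T\mathcal N$ negative of dimension $\kappa$, hence \emph{maximal} negative in $\mathcal P_2$ because the indices agree, and then estimates along the decomposition $\mathcal P_2=(T\mathcal N)^{[\perp]}[\oplus]\,T\mathcal N$ --- i.e.\ along the image of $\mathcal N$, not along $\mathcal P_2^-$. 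Without this, your first claim is circular. (2) \emph{The adjoint.} You rightly observe that $T^{\#}J_2T\le J_1$ does not transpose, but the promised ``explicit finite-dimensional correction'' is never exhibited, and the Julia-operator route presupposes that the defect space of $T$ is a Hilbert space, which is essentially the assertion being proved. A short complete argument is the congruence identity
\[
\begin{pmatrix} I&0\\ -T&I\end{pmatrix}\begin{pmatrix} I_1&T^*\\ T&I_2\end{pmatrix}\begin{pmatrix} I&-T^*\\ 0&I\end{pmatrix}=\begin{pmatrix} I_1&0\\ 0&I_2-TT^*\end{pmatrix},
\]
together with its mirror image, which gives $\mathrm{ind}_-(I_1-T^*T)+\mathrm{ind}_-\mathcal P_2=\mathrm{ind}_-(I_2-TT^*)+\mathrm{ind}_-\mathcal P_1$ and hence $\mathrm{ind}_-(I_2-TT^*)=0$ under the equal-index hypothesis. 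Finally, the ``transport to $\mathbb H$'' is legitimate in principle but not free: it rests on the spectral theorem for quaternionic Hermitian matrices, the quaternionic Riesz representation theorem, and the existence and topological equivalence of fundamental decompositions, all established in \cite{as3}; these should be invoked at the specific points where they are used rather than assumed wholesale.
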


A key result used in the proof of the Krein-Langer factorization is the following
invariant subspace theorem.

\begin{thm}\cite[Theorem 4.6]{2013arXiv1310.1035A}
A contraction in a quaternionic Pontryagin space has a unique
maximal invariant negative subspace, and it is one-to-one on it.
\label{tm:inv}
\end{thm}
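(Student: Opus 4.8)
The plan is to construct the invariant subspace as a fixed point of a Riccati-type transformation, in the spirit of the classical angular-operator argument, with the quaternionic Schauder--Tychonoff theorem of Section~4 supplying the fixed point. First I would fix a fundamental decomposition $\mathcal P=\mathcal P_+[\oplus]\mathcal P_-$ with $\dim\mathcal P_-=\kappa$ and write $T$ in block form relative to it, with blocks $A\colon\mathcal P_+\to\mathcal P_+$, $B\colon\mathcal P_-\to\mathcal P_+$, $C\colon\mathcal P_+\to\mathcal P_-$ and $D\colon\mathcal P_-\to\mathcal P_-$. Every maximal non-positive subspace of $\mathcal P$ is the graph $\mathcal N_k=\{kx+x\,:\,x\in\mathcal P_-\}$ of a contraction (angular operator) $k\colon\mathcal P_-\to\mathcal P_+$, $\|k\|\le 1$. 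Since $T(kx+x)=(Ak+B)x+(Ck+D)x$, with first summand in $\mathcal P_+$ and second in $\mathcal P_-$, the subspace $\mathcal N_k$ is $T$-invariant precisely when $(Ak+B)x=k(Ck+D)x$ for all $x$, i.e. when $k$ solves the Riccati equation
\[
k=(Ak+B)(Ck+D)^{-1}.
\]
I would thus reduce the theorem to producing a fixed point of $\Phi(k)=(Ak+B)(Ck+D)^{-1}$ on the set $\mathcal K$ of contractions from $\mathcal P_-$ to $\mathcal P_+$.

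The leverage comes entirely from $T$ being a contraction. For $k\in\mathcal K$ the subspace $\mathcal N_k$ is non-positive, so $[Th,Th]\le[h,h]\le 0$ shows $T\mathcal N_k$ is again non-positive; once one knows $Ck+D$ is invertible (equivalently that $T$ is injective on $\mathcal N_k$), $T\mathcal N_k$ is $\kappa$-dimensional, hence maximal non-positive, and equals $\mathcal N_{\Phi(k)}$. This simultaneously shows $\Phi(k)\in\mathcal K$, so that $\Phi\colon\mathcal K\to\mathcal K$, and that fixed points of $\Phi$ are exactly the $T$-invariant maximal non-positive subspaces. Because $\mathcal P_-$ is finite dimensional, $\mathcal K$ is a bounded, convex, weakly closed subset of $\mathcal P_+^{\kappa}$, hence convex and compact in the weak operator topology; and since the products and the inversion defining $\Phi$ all take place in the finite-dimensional factor $\mathcal P_-$, where weak and norm convergence agree, $\Phi$ is continuous for that topology. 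The quaternionic Schauder--Tychonoff theorem then furnishes a fixed point $k_0$, and $\mathcal M:=\mathcal N_{k_0}$ is an invariant maximal non-positive subspace. The injectivity assertion is then transparent: when $\mathcal M$ is genuinely negative, every nonzero $h\in\mathcal M$ has $[h,h]<0$, whence $[Th,Th]\le[h,h]<0$ and $Th\neq 0$.

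The step I expect to carry the real weight is exactly the boundary behaviour separating \emph{non-positive} from \emph{negative}: one must guarantee that $Ck+D$ is invertible for every $k\in\mathcal K$ (so that $\Phi$ is everywhere defined and Schauder--Tychonoff applies on the full compact ball) and that the fixed point $k_0$ may be taken a strict contraction, so that $\mathcal M$ is negative definite rather than merely non-positive. Applying the contraction inequality to $h=kx+x$ gives $\|(Ak+B)x\|^2-\|(Ck+D)x\|^2\le\|kx\|^2-\|x\|^2$, which already yields invertibility whenever $\|kx\|<\|x\|$; the delicate case is the neutral directions $\|kx\|=\|x\|$ arising on $\|k\|=1$, which I would control using that $T^{*}$ is also a contraction (Theorem~\ref{tmtm2}) together with the finite-dimensionality of $\mathcal P_-$. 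Uniqueness would then follow from a spectral/extremal characterization of $\mathcal M$: on a negative definite invariant subspace $T$ is expansive for $-[\cdot,\cdot]$, so $T|_{\mathcal M}$ is invertible with contractive inverse and all its eigenvalues satisfy $|\lambda|\ge 1$; identifying $\mathcal M$ with the spectral subspace attached to $\{|\lambda|\ge 1\}$ forces any two maximal negative invariant subspaces to coincide. I expect these two points---boundary invertibility together with strict negativity, and the uniqueness argument---to be the genuine difficulties, while the fixed-point construction and its transcription to the quaternionic setting are comparatively routine once the Schauder--Tychonoff theorem of Section~4 is in hand.
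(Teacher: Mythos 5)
A preliminary remark: the paper does not actually prove Theorem~\ref{tm:inv}; it imports it from \cite{2013arXiv1310.1035A} and only records that the proof there follows the complex-case argument of Dritschel and Rovnyak \cite{MR92m:47068}, resting on the quaternionic Schauder--Tychonoff theorem and Lemma~\ref{previous}. Your plan --- angular operators for maximal non-positive subspaces, the Riccati map $\Phi(k)=(Ak+B)(Ck+D)^{-1}$ on the closed operator ball, weak compactness coming from $\dim\mathcal P_-=\kappa<\infty$, and a fixed point from Schauder--Tychonoff --- is precisely that strategy, so the architecture is the intended one and the reduction of invariance to the Riccati equation is correct.

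That said, the steps you defer are genuine gaps, and they are where the proof actually lives. (i) Well-definedness of $\Phi$ on the whole closed ball: the contraction inequality gives only $\|(Ck+D)x\|^{2}\ge\|(Ak+B)x\|^{2}+\|x\|^{2}-\|kx\|^{2}$, which degenerates on neutral directions; a contraction may annihilate a neutral vector of $\mathcal N_k$, in which case $T\mathcal N_k$ has dimension less than $\kappa$ and is not maximal non-positive, so $\Phi(k)$ is undefined. The cure is not finite-dimensionality of $\mathcal P_-$ but the bicontraction property: one must run the analogous estimate for $T^{*}$ (available by Theorem~\ref{tmtm2}) and combine the two; you name the tool but do not carry out the argument. (ii) Schauder--Tychonoff delivers a fixed point in the \emph{closed} ball, i.e.\ a maximal \emph{non-positive} invariant subspace; the statement asserts a \emph{negative} one, and the passage from $\|k_0\|\le 1$ to strict negativity of $\mathcal N_{k_0}$ is not addressed at all. (iii) The uniqueness argument as sketched cannot close: on \emph{any} negative invariant subspace of a contraction one automatically has $[Th,Th]\le[h,h]<0$, hence every right eigenvalue of the restriction has modulus at least $1$; this condition therefore cannot by itself single out one subspace (for $T=I$ every maximal negative subspace is invariant and satisfies it). A genuinely finer spectral dichotomy is needed, and in the quaternionic setting it must be phrased through the S-spectrum (right eigenvalues occur in similarity spheres), which your sketch does not engage. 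So the proposal is a correct roadmap of the cited proof, but its three load-bearing steps --- boundary invertibility, strict negativity of the fixed subspace, and uniqueness --- remain unproved.
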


The arguments there follow the ones given in the complex case in
\cite{MR92m:47068}, and require in particular to prove first a
quaternionic version of the Schauder-Tychonoff theorem, and an
associated lemma. We recall these for completeness:

\begin{lem}\cite[Lemma 4.4]{2013arXiv1310.1035A}\label{previous}
Let $\mathcal{K}$ be a compact convex subset of a locally convex
linear quaternionic space $\mathcal{V}$ and let $T:\mathcal{K}\to
\mathcal{K}$ be continuous. If $\mathcal{K}$ contains at least
two points, then there exists a proper closed convex subset
$\mathcal{K}_1 \subset \mathcal{K}$ such that
$T(\mathcal{K}_1)\subseteq \mathcal{K}_1$.
\end{lem}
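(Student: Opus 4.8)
The plan is to produce the required set as the closed convex hull of the image of $T$, and then to treat separately the degenerate case in which this hull happens to exhaust $\mathcal{K}$.

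First I would set $\mathcal{K}_1:=\overline{{\rm conv}}\,(T(\mathcal{K}))$, the closure of the convex hull of $T(\mathcal{K})$ formed inside $\mathcal{V}$. Since $\mathcal{K}$ is convex and closed we have $\mathcal{K}_1\subseteq\mathcal{K}$, and $\mathcal{K}_1$ is convex and closed; being a closed subset of the compact set $\mathcal{K}$ it is automatically compact, so no completeness hypothesis on $\mathcal{V}$ is needed. The set $\mathcal{K}_1$ is $T$-invariant for free: from $\mathcal{K}_1\subseteq\mathcal{K}$ we get $T(\mathcal{K}_1)\subseteq T(\mathcal{K})\subseteq\overline{{\rm conv}}\,(T(\mathcal{K}))=\mathcal{K}_1$. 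The crucial point is that no affineness of $T$ enters here — invariance follows solely from the inclusion $T(\mathcal{K})\subseteq\mathcal{K}_1$. Hence if $\mathcal{K}_1\neq\mathcal{K}$ the lemma is proved with this choice.

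It remains to handle the case $\overline{{\rm conv}}\,(T(\mathcal{K}))=\mathcal{K}$. Regarding $\mathcal{V}$ as a real locally convex space, $T(\mathcal{K})$ is compact (a continuous image of a compact set) and its closed convex hull is all of $\mathcal{K}$, so Milman's partial converse to the Krein--Milman theorem yields ${\rm ext}(\mathcal{K})\subseteq T(\mathcal{K})$: every extreme point of $\mathcal{K}$ is \emph{attained} by $T$. Since $\mathcal{K}$ has at least two points, Krein--Milman supplies at least two extreme points, so ${\rm ext}(\mathcal{K})\neq\mathcal{K}$. I would then pass, by Zorn's lemma, to a minimal nonempty closed $T$-invariant set $M$ (a decreasing chain of nonempty closed $T$-invariant subsets of the compact $\mathcal{K}$ has nonempty closed $T$-invariant intersection, by the finite intersection property). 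If $M$ is a single point, that point is a fixed point of $T$ and the singleton is the desired proper $\mathcal{K}_1$; otherwise one exploits the inclusion ${\rm ext}(\mathcal{K})\subseteq T(\mathcal{K})$ together with the extreme-point structure of $\overline{{\rm conv}}\,(M)$ (Milman again) to extract a proper closed convex invariant subset, following the complex-variable argument of the reference.

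The main obstacle is exactly this last case. Because $T$ is merely continuous and not affine, the convex hull of a $T$-invariant set need not be $T$-invariant, so one cannot shrink $\mathcal{K}$ simply by taking convex hulls of smaller invariant sets; the image hull is essentially the only geometric construction that is invariant without further hypotheses, and in the degenerate case it gives nothing. The delicate point is therefore to reconcile convexity and invariance simultaneously, and the argument must genuinely use that $T$ realizes all extreme points. In the quaternionic setting there is the additional task of verifying that the tools invoked above — Hahn--Banach separation, the Krein--Milman theorem, and Milman's converse — are indeed available; this is guaranteed by treating $\mathcal{V}$ as a real locally convex space, where these classical results apply verbatim, together with the observation that every closed convex hull encountered stays inside the compact set $\mathcal{K}$ and is thus compact.
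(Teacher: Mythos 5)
First, a point of reference: the paper does not actually prove this lemma --- it is recalled verbatim from \cite[Lemma 4.4]{2013arXiv1310.1035A}, whose proof in turn follows the complex-case argument of Dritschel and Rovnyak \cite{MR92m:47068}. That argument runs through a finite-dimensional approximation: one picks a continuous seminorm $p$ with $p(u-v)>0$ for two points $u,v\in\mathcal{K}$, covers $\mathcal{K}$ by finitely many $p$-balls centred at points $y_1,\dots,y_n\in\mathcal{K}$, projects onto the polytope ${\rm conv}\{y_1,\dots,y_n\}$ by a Schauder projection, and invokes Brouwer's fixed point theorem there; the proper closed convex invariant set is then manufactured from the resulting approximate fixed points. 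Your opening reduction is correct and is indeed how one begins: $\mathcal{K}_1=\overline{{\rm conv}}\,(T(\mathcal{K}))$ is closed, convex, contained in $\mathcal{K}$, compact (being closed in $\mathcal{K}$, so no quasi-completeness is needed), and $T$-invariant for the trivial reason you give; if it is proper you are done.

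The gap is that the entire content of the lemma lives in the case $\overline{{\rm conv}}\,(T(\mathcal{K}))=\mathcal{K}$, and there your argument stops being a proof: at the decisive moment you write that one ``extracts a proper closed convex invariant subset \ldots following the complex-variable argument of the reference,'' which is precisely the step to be supplied. Worse, the tools you propose for that case --- Krein--Milman, Milman's converse, and Zorn applied to minimal closed (non-convex) invariant sets --- cannot close the gap even in principle. Note that the lemma, combined with the Zorn argument you yourself describe, yields the Schauder--Tychonoff theorem and hence Brouwer's fixed point theorem; so any proof must contain a Brouwer-strength ingredient, and your outline has none. Concretely, take $\mathcal{K}$ the closed unit disk in $\mathbb{R}^2$ and $T$ an irrational rotation: then $\overline{{\rm conv}}\,(T(\mathcal{K}))=\mathcal{K}$, the inclusion ${\rm ext}(\mathcal{K})\subseteq T(\mathcal{K})$ is true but vacuous, and a minimal nonempty closed $T$-invariant set produced by Zorn may well be the unit circle, whose closed convex hull is all of $\mathcal{K}$ --- your machinery then extracts nothing, even though a proper invariant convex subset (the fixed centre) exists. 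To repair the proof you must insert the finite-dimensional step: cover $\mathcal{K}$ by finitely many $p$-balls, use the Schauder projection and Brouwer to obtain points $z$ with $p(Tz-z)$ arbitrarily small, and build $\mathcal{K}_1$ from these, as in \cite{MR92m:47068} and \cite{2013arXiv1310.1035A}.
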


\begin{thm}[Schauder-Tychonoff] \cite[Theorem 4.5]{2013arXiv1310.1035A}
A  compact convex subset of a  locally convex quaternionic
linear space has the fixed point property.
\end{thm}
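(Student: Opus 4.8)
The plan is to run the classical minimal-invariant-set argument, with all the genuinely non-commutative content packaged into Lemma \ref{previous}. Fix a nonempty compact convex subset $\mathcal{K}$ of the locally convex quaternionic space $\mathcal{V}$ and a continuous map $T:\mathcal{K}\to\mathcal{K}$; I want to produce a point fixed by $T$. First I would consider the family $\mathcal{F}$ of all nonempty closed convex subsets $\mathcal{C}\subseteq\mathcal{K}$ that are $T$-invariant in the sense $T(\mathcal{C})\subseteq\mathcal{C}$. This family is nonempty since $\mathcal{K}\in\mathcal{F}$, and I order it by inclusion in order to apply Zorn's lemma and extract a \emph{minimal} member.

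To check the Zorn hypothesis, take a chain (totally ordered subfamily) $\{\mathcal{C}_\alpha\}$ in $\mathcal{F}$ and set $\mathcal{C}=\bigcap_\alpha \mathcal{C}_\alpha$. An arbitrary intersection of closed sets is closed and of convex sets is convex, and invariance passes to the intersection, because if $x\in\mathcal{C}$ then $Tx\in\mathcal{C}_\alpha$ for every $\alpha$, whence $Tx\in\mathcal{C}$. The one point that genuinely uses the hypotheses is nonemptiness of $\mathcal{C}$: here I would invoke that the $\mathcal{C}_\alpha$ are closed subsets of the compact set $\mathcal{K}$ enjoying the finite intersection property (any finitely many members of a chain have a smallest one, which is nonempty), so their total intersection is nonempty. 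Thus every chain admits a lower bound in $\mathcal{F}$, and Zorn's lemma furnishes a minimal element $\mathcal{K}_0\in\mathcal{F}$.

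Finally I would show that $\mathcal{K}_0$ reduces to a single point. Suppose instead that $\mathcal{K}_0$ contains at least two points. Since $\mathcal{K}_0$ is itself a compact convex set carrying the continuous self-map $T$, Lemma \ref{previous} applies and yields a proper closed convex subset $\mathcal{K}_1\subset\mathcal{K}_0$ with $T(\mathcal{K}_1)\subseteq\mathcal{K}_1$, the construction of the lemma producing $\mathcal{K}_1$ nonempty. Then $\mathcal{K}_1\in\mathcal{F}$ and $\mathcal{K}_1$ is strictly smaller than $\mathcal{K}_0$, contradicting minimality. Hence $\mathcal{K}_0=\{x_0\}$ for some $x_0$, and $T(\mathcal{K}_0)\subseteq\mathcal{K}_0$ forces $Tx_0=x_0$, giving the desired fixed point.

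The step I expect to require the most care is the nonemptiness of the chain intersection, which is exactly where compactness enters and which would fail for a merely closed convex set; I would phrase it cleanly through the finite intersection property. A second point to state precisely is that the subset delivered by Lemma \ref{previous} is nonempty, so that it really lies in $\mathcal{F}$ and yields a genuine contradiction. By contrast, the passage from the complex to the quaternionic setting is transparent at this level: convexity, closedness, and compactness are order-theoretic and topological notions insensitive to the non-commutativity of $\mathbb{H}$, and the only place the quaternionic geometry is felt is inside Lemma \ref{previous}, which I am allowed to invoke.
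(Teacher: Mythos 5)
Your argument is correct and is exactly the intended one: the paper states this theorem without proof, citing the external reference, and places Lemma \ref{previous} immediately before it precisely because it is the engine of the standard minimal-invariant-set argument you run (Zorn's lemma plus the finite intersection property on the compact set $\mathcal{K}$ to extract a minimal nonempty closed convex invariant subset, then the lemma to force that minimal set to be a singleton, hence a fixed point). Your caveat that Lemma \ref{previous} must be read as producing a \emph{nonempty} proper invariant subset is the right point to flag, since otherwise its statement is vacuously satisfied by the empty set and yields no contradiction with minimality.
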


\section{Generalized Schur functions and their realizations}
\setcounter{equation}{0}

The definition of negative squares makes sense in the
quaternionic setting since an Hermitian quaternionic matrix $H$
is diagonalizable: it can be written as $H=UDU^*$, where $U$ is
unitary and $D$ is unique and with real entries. The number of
strictly negative eigenvalues of $H$ is exactly the number of
strictly negative elements of $D$, see \cite{MR97h:15020}. The
one-to-one correspondence between reproducing kernel Pontryagin
spaces and functions with a finite number of negative squares,
proved in the classical case by \cite{schwartz,Sorjonen73},
extends to the Pontryagin space setting, see \cite{as3}.\smallskip

We first recall a definition. A quaternionic matrix $J$ is called a
signature matrix if it is both self-adjoint and unitary. The
index of $J$ is the number of strictly negative eigenvalues of
$J$, and the latter is well defined because of the spectral theorem for
quaternionic matrices. See e.g. \cite{MR97h:15020}.

\begin{defn}
\label{def1} Let $\Omega$ be an axially symmetric s-domain
contained in the unit ball, let $J_1\in\mathbb H^{s\times s}$ and
$J_2\in\mathbb H^{r\times r}$ be two signature matrix of the same
index, and let $S$ be a $\mathbb H^{r\times s}$-valued function,
slice hyperholomorphic in $\Omega$. Then $S$ is called a
generalized Schur function if the kernel
\[
K_S(p,q)=\sum_{n=0}^\infty p^n(J_2-S(p)J_1S(q)^*)\overline{q}^n
\]
has a finite number of negative squares, say $\kappa$, in $\Omega$.
We set $\kappa={\rm ind}\, S$ and call it the index of $S$.
\end{defn}

We will denote by $\mathscr S_\kappa(J_1,J_2)$ the family of
generalized Schur functions of index $\kappa$.

\begin{lem}\label{La 5.2}
In the notation of Definition \ref{def1},
let $x_0\in\Omega\cap\mathbb R$. Let $b(p)=\frac{p+x_0}{1+px_0}$. Then the function
$S\circ b$ is a generalized Schur function slice hyperholomorphic
at the origin and with the same index as $S$.
\end{lem}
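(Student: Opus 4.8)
The plan is to show that the map $p \mapsto b(p) = \frac{p+x_0}{1+px_0}$ is a slice-hyperholomorphic self-map of the unit ball that sends the origin into $\Omega$ and is a bijection of $\Omega$ onto an axially symmetric s-domain containing $0$, and then to verify that pre-composition with $b$ preserves the negative-squares count of the kernel $K_S$ exactly. Since $x_0 \in \Omega \cap \mathbb{R}$ is real, the Möbius-type transformation $b$ has real coefficients, so it commutes with every imaginary unit $I \in \mathbb{S}$ and therefore acts within each slice $\mathbb{C}_I$ exactly as the classical disk automorphism $z \mapsto \frac{z+x_0}{1+zx_0}$ does. This is the key simplification: because $x_0$ is real, $b$ is genuinely slice hyperholomorphic (no $\star$-inverse subtleties arise, as $(1+px_0)$ is central when $x_0\in\mathbb{R}$), it maps $\mathbb{B}$ to $\mathbb{B}$, and it sends $b^{-1}(x_0)$—one checks $b(0)=x_0 \in \Omega$, hence $0$ lies in the domain of $S\circ b$.

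\textbf{Carrying out the steps.} First I would record that $b$ is a bijection of the ball with slice-hyperholomorphic inverse $b^{-1}(q) = \frac{q-x_0}{1-qx_0}$, again with real coefficients; set $\widetilde\Omega = b^{-1}(\Omega)$, which is an axially symmetric s-domain (axial symmetry and the s-domain property are preserved because $b$ commutes with the action of $\mathbb{S}$ and maps $\mathbb{C}_I$ to $\mathbb{C}_I$) containing $0$ since $b(0)=x_0\in\Omega$. Thus $S\circ b$ is slice hyperholomorphic on $\widetilde\Omega$ and is defined at the origin, as claimed. Second, the heart of the matter is the kernel identity. Writing $\widetilde{S} = S\circ b$, I would verify that $K_{\widetilde S}(p,q)$ is obtained from $K_S$ by a change of variables together with multiplication by a nonvanishing factor coming from the derivative of $b$; concretely, restricting to a slice $\mathbb{C}_I$ and using that on that slice everything reduces to the classical computation, one obtains a relation of the form
\[
K_{\widetilde S}(p,q) = \varphi(p)\, K_S(b(p),b(q))\, \overline{\varphi(q)}^{\,t}
\]
for a suitable invertible scalar (or diagonal) multiplier $\varphi$ arising from the reproducing-kernel transformation law for the Szegő kernel under $b$. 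Because $x_0$ is real this multiplier is nonvanishing on $\widetilde\Omega$, so the congruence by $\varphi$ is a bijective transformation that preserves the signature of every finite Gram matrix built from $K$.

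\textbf{Concluding via negative squares.} Given the congruence identity, the number of negative squares is invariant: for any choice of points $q_1,\dots,q_N \in \widetilde\Omega$ and vectors $c_1,\dots,c_N$, the Hermitian matrix with entries $c_\ell^* K_{\widetilde S}(q_\ell,q_j) c_j$ is obtained from the matrix with entries $(\varphi(q_\ell)c_\ell)^* K_S(b(q_\ell),b(q_j))(\varphi(q_j)c_j)$, and since $b$ is a bijection $\widetilde\Omega \to \Omega$ and $\varphi$ is invertible, the two families of Gram matrices range over congruent sets as the data vary. Hence $K_{\widetilde S}$ and $K_S$ have the same number of negative squares, so $\mathrm{ind}(S\circ b) = \mathrm{ind}(S) = \kappa$. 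The main obstacle I anticipate is the precise bookkeeping of the kernel transformation law in the slice-hyperholomorphic setting: one must justify the scalar-multiplier identity for $K_S$ under the real-coefficient Möbius map—most cleanly by restricting to a single slice $\mathbb{C}_I$, invoking the classical Szegő-kernel change-of-variable formula there, and then using the Structure Formula \eqref{repr} (together with the representation of $K_S$ as a power series in $p$ and $\overline q$) to extend the identity off the slice. Care is needed to confirm that the extension of the slice identity is compatible with the definition of $K_S$ as $\sum_n p^n(J_2 - S(p)J_1 S(q)^*)\overline q^n$, but once this is in place the negative-squares invariance is formal.
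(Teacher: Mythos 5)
Your proposal is correct and follows essentially the same route as the paper: the heart of both arguments is the congruence identity
$K_{S\circ b}(p,q)=(1-x_0^2)(1+px_0)^{-1}\,K_S(b(p),b(q))\,(1+\overline{q}x_0)^{-1}$,
which holds because $x_0$ is real, so $(1+px_0)^{-\star}=(1+px_0)^{-1}$ and the multiplier commutes past the coefficient matrix; invariance of the number of negative squares then follows exactly as you say. The only (cosmetic) difference is that the paper verifies this identity by a direct $\star$-product computation, using $\sum_n p^nM\overline{q}^n=M\star(1-p\overline{q})^{-\star}$ and the formula $(1-b(p)\overline{b(q)})^{-\star}=\frac{1}{1-x_0^2}(1+px_0)(1-p\overline{q})^{-\star}(1+\overline{q}x_0)$, rather than by restricting to a slice and extending.
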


\begin{proof}
First of all, we note that $(1+px_0)^{-\star}=(1+px_0)^{-1}$ since $x_0\in\mathbb R$, and that
$(1+px_0)^{-1}$ commute with $p+x_0$ thus the rational function $b(p)$ is well defined.
The result then follows from the formula
\begin{equation}\label{sameindex}
\begin{split}
\sum_{n=0}^\infty p^n(J_2-S(b(p))J_1S(b(q))^*)\overline{q}^n&=
(1-x_0^2)\times\\
&\hspace{-5cm}\times(1+px_0)^{-1}\left(\sum_{n=0}^\infty
b(p)^n(J_2-S(b(p))J_1S(b(q))^*)\overline{b(q)}^n\right)(1+\overline{q}x_0)^{-1}.
\end{split}
\end{equation}
To show the validity of (\ref{sameindex}) we use \cite[Proposition 2.22]{2013arXiv1310.1035A} to compute the left hand side which gives
\begin{equation}\label{sameindexleft}
\sum_{n=0}^\infty p^n(J_2-S(b(p))J_1S(b(q))^*)\overline{q}^n=
(J_2-S(b(p))J_1S(b(q))^*)\star (1-p\bar q)^{-\star},
\end{equation}
where the $\star$-product is the left one and it is computed with respect to $p$.
The right hand side of (\ref{sameindex}) can be computed in a similar was and gives
\begin{equation}\label{sameindexright}
\begin{split}
&(1-x_0^2)(1+px_0)^{-1}\left(\sum_{n=0}^\infty
b(p)^n(J_2-S(b(p))J_1S(b(q))^*)\overline{b(q)}^n\right)(1+\overline{q}x_0)^{-1}\\
& (1-x_0^2)(1+px_0)^{-1} (J_2-S(b(p))J_1S(b(q))^*)\star (1-b(p)\overline{b(q)})^{-\star}    (1+\overline{q}x_0)^{-1}.\\
\end{split}
\end{equation}
We now note that
\[
(1-b(p)\overline{b(q)})^{-\star}=\left(1- \frac{p+x_0}{1+px_0}\frac{\bar q+x_0}{1+\bar q x_0}\right)^{-\star}\\
= \frac{1}{1-x_0^2}(1+px_0)(1-p\bar q)^{-\star}(1+\bar q x_0)
\]
and substituting this expression in (\ref{sameindexright}), and using the property that
$$
(J_2-S(b(p))J_1S(b(q))^*)\star (1+px_0)=(1+px_0)(J_2-S(b(p))J_1S(b(q))^*)
$$
since $x_0\in\mathbb R$,
we obtain
\[
\begin{split}
(1-x_0^2)&(1+px_0)^{-1} (J_2-S(b(p))J_1S(b(q))^*)\\
&\star  \frac{1}{1-x_0^2}(1+px_0)(1-p\bar q)^{-\star}(1+\bar q x_0)   (1+\overline{q}x_0)^{-1}\\
&(1+px_0)^{-1} (1+px_0) (J_2-S(b(p))J_1S(b(q))^*)(1-p\bar q)^{-\star}\\
&= (J_2-S(b(p))J_1S(b(q))^*)(1-p\bar q)^{-\star}
\end{split}
\]
and the statement follows.
\end{proof}

The reproducing kernel Pontryagin space $\mathcal P(S)$ associated
with a generalized Schur function $S$, namely the space with reproducing kernel $K_S$, is a right quaternionic
vector space, with functions taking values in a two-sided
quaternionic vector space. To present the counterpart of
\eqref{realco} with $\mathcal P(S)$ as a state space we first
recall the following result, see \cite[Proposition
2.22]{2013arXiv1310.1035A}.

\begin{prop}
Let $A$ be a bounded linear operator from a right-sided
quaternionic Hilbert $\mathcal P$ space into itself, and let $C$
be a bounded linear operator from $\mathcal P$ into $\mathcal C$,
where $\mathcal C$ is a two sided quaternionic Hilbert space. The
slice hyperholomorphic extension of $C(I-xA)^{-1}$,
$1/x\in\rho_S(A)\cap\mathbb{R}$, is
\[
(C-\overline{p}CA)(I-2{\rm Re}(p)\, A +|p|^2A^2)^{-1}.
\]
\label{formula060813}
\end{prop}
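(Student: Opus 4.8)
The plan is to show that the displayed expression is the (unique) slice hyperholomorphic extension of $g(x):=C(I-xA)^{-1}$ by producing, near the real axis, a power series representation of it and then invoking uniqueness of the extension. The first and easiest point is that the formula already reproduces $g$ on $\mathbb R$: for real $p=x$ one has $\bar p=x$, $2{\rm Re}(p)=2x$ and $|p|^2=x^2$, so $I-2{\rm Re}(p)A+|p|^2A^2=(I-xA)^2$ and $C-\bar pCA=C(I-xA)$, whence the right-hand side equals $C(I-xA)(I-xA)^{-2}=C(I-xA)^{-1}$. This also clarifies the role of the hypothesis: $1/x\in\rho_S(A)\cap\mathbb R$ is exactly the invertibility of $I-xA$, hence of $Q(p):=I-2{\rm Re}(p)A+|p|^2A^2$ at $p=x$, and for general $p$ the operator $Q(p)$ is invertible on an axially symmetric set containing this real interval, which will be the domain of the extension.

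The heart of the matter is the operator identity
\[
\sum_{n=0}^\infty p^nCA^n=(C-\bar pCA)\,\bigl(I-2{\rm Re}(p)A+|p|^2A^2\bigr)^{-1},
\]
valid for $|p|$ small enough that the Neumann series converges and $Q(p)$ is invertible. I would prove it by multiplying the series on the right by $Q(p)$ and collecting, for each $m$, the coefficient of $CA^m$, using $2{\rm Re}(p)=p+\bar p$ and $|p|^2=p\bar p=\bar pp$. The contributions with $m=0$ and $m=1$ leave precisely $C-\bar pCA$, while for $m\geq 2$ the coefficient is $p^{m-2}(p^2-2{\rm Re}(p)p+|p|^2)$, and the decisive cancellation $p^2-2{\rm Re}(p)p+|p|^2=p^2-(p+\bar p)p+p\bar p=0$ (which rests on $\bar pp=p\bar p$) annihilates every higher term. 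Thus the series telescopes to the closed form on the right.

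Since $\sum_n p^nCA^n$ is a power series whose coefficients $CA^n$ stand to the right of $p^n$, it is left slice hyperholomorphic, and it agrees on $\mathbb R$ with $\sum_n x^nCA^n=g$; hence the formula $F(p):=(C-\bar pCA)Q(p)^{-1}$ is slice hyperholomorphic and coincides with $g$ near the real axis. To pass to the full domain I would argue by real-analytic continuation: both $F$ and the unique slice hyperholomorphic extension $\tilde g$ of $g$ are real-analytic on the connected, axially symmetric slice domain where $Q(p)$ is invertible and they agree on a neighborhood of $\mathbb R$, so the identity principle — applied slice by slice on each connected $\Omega\cap\mathbb C_I$ — forces $F=\tilde g$ throughout (equivalently, the real-analytic residual $\frac12(\partial_x+I\partial_y)F_I$ vanishes near the reals and hence on every whole slice). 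The main obstacle is the non-commutative bookkeeping in the central identity: one must keep $p^n$ and $\bar p$ as left multipliers while the real coefficients of $Q(p)$ commute freely with $A$, and recognize that the single relation $p^2-2{\rm Re}(p)p+|p|^2=0$ is exactly what collapses the infinite series to a rational expression.
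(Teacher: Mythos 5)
The paper states this proposition without proof, merely citing \cite[Proposition 2.22]{2013arXiv1310.1035A}, so there is no in-paper argument to compare against; your proof is correct and is the standard one for such resolvent-type slice extensions. The central identity $\bigl(\sum_{n\ge 0} p^nCA^n\bigr)\bigl(I-2{\rm Re}(p)A+|p|^2A^2\bigr)=C-\overline{p}CA$ checks out --- the coefficient of $CA^m$ for $m\ge 2$ is $p^{m-2}\bigl(p^2-(p+\overline{p})p+p\overline{p}\bigr)=0$ since $p$ and $\overline{p}$ commute --- and the remaining steps (agreement with $C(I-xA)^{-1}$ on the reals, left slice hyperholomorphy of the power series with right-standing coefficients, and propagation from a real neighborhood to the full axially symmetric domain by the identity principle on each connected slice $\Omega\cap\mathbb{C}_I$) are all sound.
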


\noindent We will use the notation
\begin{equation}
C\star (I-pA)^{-\star}\stackrel{\rm def.}{=}(C-\overline{p}CA)(I-2{\rm
Re}(p)\, A +|p|^2A^2)^{-1}.
\end{equation}

For the following result see \cite{acs1,MR3127378}. First two
remarks: in the statement, an observable pair is defined, as in
the complex case, by \eqref{obsr}. Next, we denote by $M^*$ the
adjoint of a quaternionic bounded linear operator from a Pontryagin space $\mathcal
P_1$ into a Pontryagin space $\mathcal P_2$:
\[
[Mp_1\,,\,p_2 ]_{\mathcal P_2}= [ p_1\,,\,M^*p_2
]_{\mathcal P_1},\quad p_1\in\mathcal P_1\quad{\rm
and}\quad p_2\in\mathcal P_2.
\]

\begin{thm}
Let $J_1\in\mathbb H^{s\times s}$ and $J_2\in\mathbb H^{r\times
r}$ be two signature matrix of the same index, and let $S$ be slice
hyperholomorphic in a neighborhood of the origin. Then, $S$ is in
$\mathscr S_\kappa(J_1,J_2)$ if and only if it can written in the
form
\begin{equation}\label{realSB}
S(p)=D+p C\star(I_{\mathcal P}-pA)^{-\star}B,
\end{equation}
where $\mathcal P$ is a right quaternionic Pontryagin space of
index $\kappa$, the pair $(C,A)$ is observable, and the
operator matrix
\begin{equation}
\label{eqM}
M=\begin{pmatrix}A&B\\ C&D\end{pmatrix}: \,\,\, \mathcal
P\oplus \mathbb H^s\,\,\,\longrightarrow\,\,\, \mathcal
P\oplus \mathbb H^r
\end{equation}
satisfies
\begin{equation}
\label{milano2013}
\begin{pmatrix}A&B\\ C&D\end{pmatrix}\begin{pmatrix} I_{\mathcal P}&0\\
0&J_1\end{pmatrix}\begin{pmatrix}A&B\\ C&D\end{pmatrix}^*=
\begin{pmatrix} I_{\mathcal P}&0\\
0&J_2\end{pmatrix}.
\end{equation}
\label{tm:real}
\end{thm}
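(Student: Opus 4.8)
The plan is to prove the equivalence by constructing, from a generalized Schur function $S$, the canonical backward-shift realization on the reproducing kernel Pontryagin space $\mathcal{P}(S)$, and conversely to read off the number of negative squares of $K_S$ directly from a given coisometric realization. Since $S$ is already assumed slice hyperholomorphic near the origin, no preliminary normalization is needed; otherwise one would first reduce to the origin by Lemma \ref{La 5.2}. Throughout I would rely on the correspondence of \cite{as3} between matrix kernels with $\kappa$ negative squares and reproducing kernel Pontryagin spaces of index $\kappa$, and on the extension convention of Proposition \ref{formula060813} defining $C\star(I-pA)^{-\star}$. The hypothesis that $J_1,J_2$ have the same index guarantees that $\mathcal{P}\oplus\mathbb{H}^s$ and $\mathcal{P}\oplus\mathbb{H}^r$ have equal index, which is what makes the coisometric colligation consistent.

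For the direction $(\Rightarrow)$, assume $S\in\mathscr{S}_\kappa(J_1,J_2)$. By Definition \ref{def1} and \cite{as3} there is a right quaternionic Pontryagin space $\mathcal{P}(S)$ of index $\kappa$ with reproducing kernel $K_S$. I would set $Af=R_0f$, $Cf=f(0)$, $B\xi=R_0(S\xi)$ and $D\xi=S(0)\xi$, for $f\in\mathcal{P}(S)$ and $\xi\in\mathbb{H}^s$. Observability is immediate, since $CA^nf$ is the coefficient of $p^n$ in the power series of $f$, whence $\cap_n\ker CA^n=\{0\}$, which is \eqref{obsr}; the realization \eqref{realSB} then holds by matching Taylor coefficients at the origin, because $CA^nB\xi$ is the coefficient of $p^{n+1}$ in $S\xi$ while $D=S(0)$. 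The heart of the argument is the coisometry \eqref{milano2013}. From the defining relation $K_S(\cdot,q)\star(1-p\bar q)=J_2-S(\cdot)J_1S(q)^*$, applying $R_0$ in the variable $p$ yields the key reproducing kernel identity
\[
R_{0}K_S(\cdot,q)=K_S(\cdot,q)\,\overline{q}-(R_0S)\,J_1S(q)^*,
\]
in which $\bar q$ must appear on the right, consistent with the right module structure. Together with $C^*\xi=K_S(\cdot,0)\xi$, this identity expresses $A$, $B$ and the point evaluations on the generating kernels $K_S(\cdot,q)\xi$. Testing the four block identities $AA^*+BJ_1B^*=I$, $AC^*+BJ_1D^*=0$, $CA^*+DJ_1B^*=0$ and $CC^*+DJ_1D^*=J_2$ on the total set $\{K_S(\cdot,q)\xi\}$ together with $\mathbb{H}^s$ and $\mathbb{H}^r$ then reduces \eqref{milano2013} to the reproducing property of $K_S$.

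For the direction $(\Leftarrow)$, suppose $S$ has the form \eqref{realSB} with $\mathcal{P}$ of index $\kappa$, with $(C,A)$ observable, and with $M$ satisfying \eqref{milano2013}. Using \eqref{milano2013} one telescopes $J_2-S(p)J_1S(q)^*$ into the Gram representation
\[
K_S(p,q)=\bigl(C\star(I-pA)^{-\star}\bigr)\bigl(C\star(I-qA)^{-\star}\bigr)^{*},
\]
where the adjoint is taken in $\mathcal{P}$. Since $(C,A)$ is observable, the map sending $x\in\mathcal{P}$ to the function $p\mapsto C\star(I-pA)^{-\star}x$ is injective, so this exhibits $K_S$ as the pullback of the inner product of $\mathcal{P}$ under an injective map; by the negative-squares count for such pullbacks (\cite{as3}) the kernel $K_S$ has exactly $\kappa$ negative squares, i.e. $S\in\mathscr{S}_\kappa(J_1,J_2)$. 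The same observability operator intertwines any two observable coisometric realizations, giving uniqueness up to an isomorphism of Pontryagin spaces, as in the complex case treated in \cite{adrs}.

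The main obstacle I anticipate is the bookkeeping forced by noncommutativity. One must verify that $R_0$ genuinely maps $\mathcal{P}(S)$ boundedly into itself (backward-shift invariance), and that in the kernel identity and in all adjoint computations the quaternionic scalars $\bar q$, the signature matrices $J_1,J_2$, and the $\star$-product are inserted on the correct side, since the reproducing property pairs values through the right module structure. The passage between the formal series $\sum_n p^{n+1}CA^nB$ and the slice hyperholomorphic object $pC\star(I-pA)^{-\star}B$ has to be routed through Proposition \ref{formula060813}, and the adjoints in \eqref{milano2013} are taken with respect to the indefinite inner product, so the positivity shortcuts available in the Hilbert space case must be replaced everywhere by the Pontryagin-space results of \cite{as3} and the contraction theory of Theorem \ref{tmtm2}.
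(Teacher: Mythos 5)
Your proposal is correct in outline and takes the same route as the proof the paper relies on: the paper does not prove Theorem \ref{tm:real} in situ but cites \cite{acs1,MR3127378}, and the backward-shift colligation it records in \eqref{eq:dbh} is exactly the one you build on $\mathcal P(S)$, with the converse obtained by the same telescoping of \eqref{milano2013} into a Gram representation of $K_S$ and the observability/density argument for the exact count $\kappa$. The kernel identity $R_0K_S(\cdot,q)=K_S(\cdot,q)\overline{q}-(R_0S)J_1S(q)^*$ you isolate is the correct quaternionic form (it follows directly from the series definition of $K_S$) and is indeed the crux of the coisometry verification.
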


The space $\mathcal P$ can be chosen to be the reproducing kernel
Pontryagin space $\mathcal P(S)$ with reproducing kernel $K_S(p,q)$. Then the
coisometric colligation \eqref{eqM} is given by:

\begin{equation}
\label{eq:dbh}
\begin{split}
(Af)(p)&=\begin{cases}p^{-1}(f(p)-f(0)),\quad p\not =0,\\
f_1,\quad\hspace{2.6cm}
 p=0,
\end{cases}\\
 (Bv)(p)&=\begin{cases}p^{-1}(S(p)-S(0))v,
 \quad p\not =0,\\
s_1v,\quad\hspace{2.6cm}p=0,
 \end{cases}\\
 Cf&=f(0),\\
 Dv&=S(0)v,
\end{split}
\end{equation}
where $v\in\mathbb H^s$, $S(p)=\sum_{n=0}^\infty p^ns_n$ and $f\in\mathcal P$ with power series
$f(p)=\sum_{n=0}^\infty p^nf_n$ at the origin.\\

Assume now in the previous theorem that $r=s$, $J_1=J_2=J$, and
that ${\rm dim}\, \mathcal P(S)$ is finite.  Then,
equation \eqref{milano2013} is an equality in finite dimensional
spaces (or as matrices) and the function $S$ is called
$J$-unitary. The function $S$ is moreover rational and its McMillan
degree, denoted by ${\rm deg}\, S$, is the dimension of the space
$\mathcal P(S)$ (we refer to \cite{acs1} for the notion of
rational slice-hyperholomorphic functions. Suffices here to say
that the restriction of $S$ to the real axis is a $\mathbb
H^{r\times r}$-valued rational function of a real
variable).\smallskip

The $\star$-factorization $S=S_1\star S_2$ of $S$ as a $\star$-product of two $\mathbb H^{r\times r}$-valued $J$-unitary
functions is called minimal if ${\rm deg}\, S={\rm deg}\,
S_1+{\rm deg}\, S_2$. When $\kappa=0$, $S$ is a minimal product
of elements of three types, called Blaschke-Potapov factors, and
was first introduced by V. Potapov in \cite{potapov} in the complex
case. We give now a formal definition of the Blaschke-Potapov factors:

\begin{defn}
A $\mathbb H^{r\times r}$-valued Blaschke-Potapov factor of the
first kind (resp, second kind) is defined as:
$$
B_{a}(p,P)=I_r+(B_a(p)-I_r)P
$$
where $|a|<1$ (resp. $|a|>1$) and $J,P\in\mathbb H^{r\times r}$, $J$
being a signature matrix, and $P$ a matrix such that $P^2=P$ and
$JP\geq 0$.\\
A $\mathbb H^{r\times r}$-valued Blaschke-Potapov factor of the
third kind is defined as:
$$
I_r-ku\star(p+w_0)\star(p-w_0)^{-\star} u^*J
$$
where $u\in\mathbb H^r$ is $J$-neutral (meaning $uJu^*=0$),
$|w_0|=1$ and $k>0$.
\end{defn}
\begin{rem}{\rm  In the setting of circuit theory, Blaschke-Potapov
factors of the third kind are also called Brune sections, see
e.g. \cite{MR85m:94012}, \cite{abdd}.}
\end{rem}

In the sequel, by Blaschke product we mean the product of
Blaschke-Potapov factors.
Note that the inverse of a Blaschke-Potapov factor of the
first kind is $B_{a}^{-\star}(p,P)=I_r+(B_a(p)^{-\star}-I_r)P$. \\
\smallskip

When $\kappa>0$ there need not exist minimal factorizations. We
refer to \cite{ad3,ag} for examples in the complex-valued case.
On the other hand, still when $\kappa>0$ but for $J=I_r$, a
special factorization exists, as a $\star$-quotient of two
Blaschke products. This is a special case of the factorization of
Krein-Langer. The following result plays a key role in the proof
of this factorization. It is specific of the case $J_1=I_s$ and
$J_2=I_r$, which allows us to use the fact that the adjoint of a
contraction between quaternionic Pontryagin spaces of the same index
is still a contraction.

\begin{prop}
\label{pn:acont} In the notation of Theorem \ref{tm:real}, assume
$J_1=I_s$ and $J_2=I_r$. Then the operator $A$ is a Pontryagin
contraction.
\end{prop}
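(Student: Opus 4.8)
The plan is to specialize the colligation identity \eqref{milano2013} to the present situation, deduce that the full operator matrix $M$ is a coisometry, and then transfer contractivity first from $M^*$ to $M$ and finally from $M$ to its upper-left corner $A$.

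First I would substitute $J_1=I_s$ and $J_2=I_r$ into \eqref{milano2013}. The two diagonal factors then collapse to the identities on $\mathcal P\oplus\mathbb H^s$ and $\mathcal P\oplus\mathbb H^r$, so the identity reduces to
\[
MM^*=I_{\mathcal P\oplus\mathbb H^r},
\]
that is, $M$ is a coisometry. Equivalently $M^*$ is an isometry: for every $x\in\mathcal P\oplus\mathbb H^r$ the adjoint relation with $h=M^*x$ gives
\[
[M^*x,M^*x]=[MM^*x,x]=[x,x],
\]
so in particular $M^*$ is a (bounded, everywhere defined) contraction.

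The crucial point is a statement about indices. Because $J_1=I_s$ and $J_2=I_r$ are positive, the spaces $\mathbb H^s$ and $\mathbb H^r$ are finite-dimensional Hilbert spaces and contribute no negative part. Hence both $\mathcal P\oplus\mathbb H^s$ and $\mathcal P\oplus\mathbb H^r$ have the same negative index, namely $\kappa={\rm ind}\,S$, the index of $\mathcal P$. Thus $M$ is an operator between quaternionic Pontryagin spaces of the same index, and Theorem \ref{tmtm2} applies: since $M^*$ is a contraction, its adjoint $M=(M^*)^*$ is a contraction as well. This is precisely where the hypothesis $J_1=I_s$, $J_2=I_r$ is indispensable, for a general signature $J_2$ the range space would carry additional negative directions, the two indices would differ, and the adjoint of a contraction would no longer need to be a contraction; I expect this index bookkeeping to be the only genuinely delicate step.

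Finally I would feed vectors of the special form $\begin{pmatrix} f\\ 0\end{pmatrix}$, $f\in\mathcal P$, into the contractive inequality for $M$. Since
\[
M\begin{pmatrix}f\\0\end{pmatrix}=\begin{pmatrix}Af\\ Cf\end{pmatrix},
\]
contractivity yields
\[
[Af,Af]_{\mathcal P}+\langle Cf,Cf\rangle_{\mathbb H^r}\le[f,f]_{\mathcal P}.
\]
As $\mathbb H^r$ is a Hilbert space, $\langle Cf,Cf\rangle_{\mathbb H^r}\ge0$, and discarding this nonnegative term gives $[Af,Af]_{\mathcal P}\le[f,f]_{\mathcal P}$ for all $f\in\mathcal P$, which is exactly the assertion that $A$ is a Pontryagin contraction. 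Everything beyond the index argument is a direct reading-off from the coisometry relation.
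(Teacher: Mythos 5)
Your proposal is correct and follows essentially the same route as the paper: read off from \eqref{milano2013} with $J_1=I_s$, $J_2=I_r$ that $M$ is a coisometry, use the equal-index adjoint theorem (Theorem \ref{tmtm2}) to pass from the contractivity of $M^*$ to that of $M$, and then discard the nonnegative term $C^*C$ coming from the Hilbert space $\mathbb H^r$ to conclude $A^*A\le I_{\mathcal P}$. Your write-up in fact makes the index bookkeeping and the positivity of $J_1,J_2$ more explicit than the paper does, but the argument is the same.
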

\begin{proof}
Equation \eqref{milano2013} expresses that the operator matrix
$M$ (defined by \eqref{eqM}) is a coisometry, and in particular a
contraction, between Pontryagin spaces of same index. Its adjoint
is a Pontryagin space contraction (see \cite{as3}) and we have
\[
\begin{pmatrix}A&B\\ C&D\end{pmatrix}^*\begin{pmatrix} I_{\mathcal P}&0\\
0&I_r\end{pmatrix}\begin{pmatrix}A&B\\ C&D\end{pmatrix}\le
\begin{pmatrix} I_{\mathcal P}&0\\
0&I_s\end{pmatrix}.
\]
It follows from this inequality that
\begin{equation}
\label{eq:acont}
A^*A+C^*C\le I_s.
\end{equation}
Since the range of $C$ is inside the Hilbert space $\mathbb H^r$
we have that $A^*$ is a contraction from $\mathcal P$ into itself,
and so is its adjoint $A=(A^*)^*$.
\end{proof}
\section{The factorization theorem}
\setcounter{equation}{0} Below we prove a version of the
Krein-Langer factorization theorem in the slice hyperholomorphic
setting which generalizes \cite[Theorem 9.2]{acs3}. The role of
the Blaschke factors $B_a$ in the scalar case is played here by
the Blaschke-Potapov factors with $J=I$.

\begin{thm}
Let $J_1=I_s$ and $J_2=I_r$, and let $S$ be a $\mathbb H^{r\times
s}$-valued generalized Schur function of index $\kappa$. Then
there exists a $\mathbb H^{r\times r}$-valued Blaschke product
$B_0$  of degree $\kappa$ and a
 $\mathbb H^{r\times s}$-valued  Schur function $S_0$
such that
\[
S(p)=(B_0^{-\star}\star S_0)(p).
\]
\end{thm}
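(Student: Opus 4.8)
The plan is to read the Blaschke product off the negative part of the state space of a coisometric realization of $S$, and then to verify that left $\star$-multiplication by it removes the negative squares. First I would use Lemma \ref{La 5.2} to reduce to the case in which $S$ is slice hyperholomorphic in a neighborhood of the origin: composing with a real M\"obius map $b$ preserves the index $\kappa$, and any factorization of $S\circ b$ transports back to one of $S$ because $b$ is real, hence $\star$-commutes with the matrix factors. With $0\in\Omega$, Theorem \ref{tm:real} applied with $J_1=I_s$, $J_2=I_r$ realizes
\[
S(p)=D+pC\star(I_{\mathcal P}-pA)^{-\star}B,
\]
where $\mathcal P=\mathcal P(S)$ is a right quaternionic Pontryagin space of index $\kappa$, the pair $(C,A)$ is observable, and the colligation matrix $M$ is coisometric. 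By Proposition \ref{pn:acont} the main operator $A$ is then a Pontryagin contraction on $\mathcal P$.

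Next I would apply Theorem \ref{tm:inv} to $A$: it has a unique maximal invariant negative subspace $\mathcal M$, on which it is one-to-one. Since $\mathcal M$ is maximal negative and $\mathcal P$ has negative index $\kappa$, we get $\dim\mathcal M=\kappa$, and injectivity forces $A|_{\mathcal M}$ to be invertible. On the finite-dimensional anti-Hilbert space $\mathcal M$ the contraction $A$ is expansive (from $[Ax,Ax]\le[x,x]$ with $[\cdot,\cdot]$ negative definite on $\mathcal M$), so the spectral data of $(A|_{\mathcal M})^{-1}$ are the poles of $S$ and lie in $\mathbb B\setminus\{0\}$; these nonzero points (spheres, in the quaternionic reading) are exactly the prescribed zeros of the sought Blaschke product, the nonvanishing being guaranteed by the injectivity in Theorem \ref{tm:inv}. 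Using Theorem \ref{blaschke zeros} I then build the $\mathbb H^{r\times r}$-valued Blaschke product $B_0$ of degree $\kappa$, a $\star$-product of Blaschke--Potapov factors with $J=I_r$, whose zeros, counted with the multiplicities of Definition \ref{mult}, reproduce this data; the count $\dim\mathcal M=\kappa$ is precisely what makes $\deg B_0=\kappa$.

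Finally I would set $S_0:=B_0\star S$ and show it is a genuine Schur function; then $S=B_0^{-\star}\star S_0$ follows at once by left $\star$-multiplication by the Blaschke product $B_0^{-\star}$. The computation rests on the kernel identity
\[
K_{B_0\star S}(p,q)=K_{B_0}(p,q)+B_0(p)\star K_S(p,q)\star_r B_0(q)^*,
\]
in which $K_{B_0}\ge 0$ with $\dim\mathcal H(B_0)=\kappa$, while $K_S$ has exactly $\kappa$ negative squares. The heart of the matter, and the step I expect to be the main obstacle, is to show that this identity produces a \emph{positive} kernel, i.e.\ that left $\star$-multiplication by $B_0$ carries the $\kappa$-dimensional negative part of $\mathcal P(S)$ (the image of $\mathcal M$) into $\mathcal H(B_0)$, where it is exactly cancelled. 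I would establish this by compressing the coisometric colligation $M$ across the invariant subspace $\mathcal M$ to obtain a contractive colligation with \emph{Hilbert} state space, and then invoking Theorem \ref{tmtm2} to produce a contractive realization of $S_0$, which is equivalent to $K_{S_0}\ge 0$. The quaternionic features make this delicate: eigenvalues occur in spheres and the noncommutativity means the factorization of $B_0$ into linear factors is not unique (Remark \ref{remB}), so the individual Blaschke factors must be selected compatibly with the $\star$-structure of $A|_{\mathcal M}$ in order that the cancellation be exact and not merely dimensional.
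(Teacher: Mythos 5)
Your Steps 1 and 2 coincide with the paper's: the reduction to a neighborhood of the origin via Lemma \ref{La 5.2} (with the same observation that composition with the real M\"obius map respects $\star$-products), the coisometric realization of Theorem \ref{tm:real}, Proposition \ref{pn:acont}, and the maximal negative invariant subspace $\mathcal M$ of $A$ supplied by Theorem \ref{tm:inv}, with $\dim\mathcal M=\kappa$. The divergence --- and the gap --- is in how $B_0$ is produced. You propose to extract only the spectral points of $A|_{\mathcal M}$ and feed them to Theorem \ref{blaschke zeros}. That theorem constructs a \emph{scalar} Blaschke product with prescribed zeroes; an $\mathbb H^{r\times r}$-valued Blaschke product is a $\star$-product of Blaschke--Potapov factors $I_r+(B_a(p)-I_r)P$, and for $r>1$ the projections $P$ (the ``directions'') are not determined by the spectral data of $A|_{\mathcal M}$ alone --- they must be read off from $C|_{\mathcal M}$ as well. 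Without them, the cancellation you need has no reason to occur: a positive kernel $K_{B_0}$ whose space has dimension $\kappa$ added to a kernel with $\kappa$ negative squares is positive only if the negative part is absorbed \emph{exactly}, and exactness is a statement about ranges and inner products, not about eigenvalue locations and a dimension count. You acknowledge this is ``the main obstacle,'' but the proposal does not close it.

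The paper closes it by never prescribing zeroes. It compresses the data to $(C_{\mathcal M},A_{\mathcal M})$, solves the Stein equation $A_{\mathcal M}^*P_{\mathcal M}A_{\mathcal M}=P_{\mathcal M}-C_{\mathcal M}^*C_{\mathcal M}$ (uniquely solvable, with strictly negative solution, because the S-spectrum of $A_{\mathcal M}$ lies outside the closed unit ball), proves $G_{\mathcal M}-P_{\mathcal M}\le 0$ by iterating the Stein inequality and applying the spectral radius theorem, and then invokes the structure theorem \cite[Proposition 8.8]{acs3} to identify $(\mathcal M,P_{\mathcal M})$ with a space $\mathcal P(B)$: the Blaschke product arrives already equipped with its reproducing kernel and with the correct matrix directions built in from $C_{\mathcal M}$. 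The positivity of $K_S-K_B=(K_S-k_{\mathcal M})+(k_{\mathcal M}-K_B)$ is then immediate from the two contractive inclusions, and \cite[Proposition 5.1]{acs3} converts it into the Schur property of $S_0$. Your alternative --- compressing the colligation across $\mathcal M$ and invoking Theorem \ref{tmtm2} to get a contractive Hilbert-space realization of $S_0$ --- is plausible in spirit, but as stated you would still have to prove that the compressed colligation is a contraction and that its characteristic function equals $B_0\star S$ for \emph{your} $B_0$; that verification again requires the directional data you have not specified. So the proof is incomplete at its central step.
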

\begin{proof} We proceed in a number of steps:\\

STEP 1: {\sl One can assume that $S$ is slice hyperholomorphic at
the origin.}\smallskip

To check this, we note that  whenever $f=g\star h$, we have $f\circ b=(g\circ b)\star (h\circ b)$ where $b(p)=\frac{p+x_0}{1+px_0}$, $x_0\in\mathbb R$.
This equality is true on $\Omega\cap \mathbb R_+$, and extends to
$\Omega$ by slice hyperholomorphic extension. Thus, taking into account Lemma \ref{La 5.2}, we now assume $0\in\Omega$.\\

STEP 2: {\sl Let \eqref{realSB} be a coisometric realization of
$S$. Then $A$ has a unique maximal strictly negative invariant
subspace $\mathcal M$.}\smallskip

Indeed, $A$ is a contraction as proved in Proposition
\ref{pn:acont}. The result then follows from
Theorem \ref{tm:inv}.\\

The rest of the proof is as in \cite{acs3}, and is as follows. Let
$\mathcal M$ be the space defined in STEP 2, and let $A_{\mathcal
M}$, $C_{\mathcal M}$ denote the matrix representations of $A$ and
$C$, respectively,  in a basis of $\mathcal M$, and let $G_{\mathcal M}$ be the
corresponding Gram matrix. It follows from \eqref{eq:acont} that
\[
A_{\mathcal M}^*G_{\mathcal M}A_{\mathcal M}\le G_{\mathcal
M}-C_{\mathcal M}^*C_{\mathcal M}.
\]

STEP 3: {\sl The equation
\[
A_{\mathcal M}^*P_{\mathcal M}A_{\mathcal M}= P_{\mathcal
M}-C_{\mathcal M}^*C_{\mathcal M}
\]
has a unique solution. It is strictly negative and $\mathcal M$
endowed with this metric is contractively included in $\mathcal
P(S)$.}\smallskip

Recall that the S-spectrum of an operator $A$ is defined as the set of quaternions $p$ such that $A^2-2{\rm Re}(p)A+|p|^2I$ is not invertible, see \cite{MR2752913}. Then, the first two claims follow from the fact that the S-spectrum of
$A_{\mathcal M}$, which coincides with the right spectrum of $A_{\mathcal M}$,
is outside the closed unit ball. Moreover, the matrix
$G_{\mathcal M}-P_{\mathcal M}$ satisfies
\[
A_{\mathcal M}^*(G_{\mathcal M}-P_{\mathcal M})A_{\mathcal M}\le
G_{\mathcal M}-P_{\mathcal M},
\]
or equivalently (since $A$ is invertible)
\[
G_{\mathcal M}-P_{\mathcal M}\le A_{\mathcal M}^{-*}(G_{\mathcal
M}-P_{\mathcal M})A_{\mathcal M}^{-1}
\]
and so, for every $n\in\mathbb N$,
\begin{equation}
\label{eqpm}
G_{\mathcal M}-P_{\mathcal M}\le (A_{\mathcal
M}^{-*})^n(G_{\mathcal M}-P_{\mathcal M})A_{\mathcal M}^{-n}.
\end{equation}
By the spectral theorem (see \cite[Theorem 3.10, p. 616]
{MR2496568} and \cite[Theorem 4.12.6, p. 155]{MR2752913} for the
spectral radius theorem) we have:
\[
\lim_{n\rightarrow\infty}\|A_{\mathcal M}^{-n}\|^{1/n}=0,
\]
and so $\lim_{n\rightarrow\infty}\|(A_{\mathcal
M}^{-*})^n(P_{\mathcal M}-G_{\mathcal M})A_{\mathcal
M}^{-n}\|=0$. Thus entry-wise
\[
\lim_{n\rightarrow\infty}(A_{\mathcal M}^{-*})^n(P_{\mathcal
M}-G_{\mathcal M})A_{\mathcal M}^{-n}=0
\]
and it follows from \eqref{eqpm} that $G_{\mathcal M}-P_{\mathcal M}\le 0$.\\

By \cite[Proposition 8.8]{acs3}
\[
\mathcal M=\mathcal P(B),
\]
when $\mathcal M$ is endowed with the $P_{\mathcal M}$ metric. Furthermore:\\

STEP 4: {\sl The kernel $K_S-K_B$ is positive.}\\

Let $k_{\mathcal M}$ denote the reproducing kernel of $\mathcal M$
when endowed with the $\mathcal P(S)$ metric. Then
\[
k_{\mathcal M}(p,q)-K_B(p,q)\ge 0
\]
and
\[
K_S(p,q)-k_{\mathcal M}(p,q)\ge 0.
\]
Moreover
\[
K_S(p,q)-K_B(p,q)=K_S(p,q)-k_{\mathcal M}(p,q)+k_{\mathcal M}(p,q)-K_B(p,q)
\]
and so it is positive definite.\\

\noindent Finally we apply \cite[Proposition 5.1]{acs3} to
\[
K_S(p,q)-K_B(p,q)=B(p)\star\left(I_r-S_0(p)S_0(q)^*\right)\star_rB(q)^*
\]
where $S_0(p)=B(p)^{-\star}\star S(p)$, to conclude that $S_0$ is a
Schur function.
\end{proof}

\section{The case of the half-space}
\setcounter{equation}{0}

Since the map (where $x_0\in\mathbb R_+$)
\[
p\,\,\mapsto\,\, (p-x_0)(p+x_0)^{-1}
\]
sends the right half-space onto the open unit ball, one can
translate the previous results to the case of the half-space
$\mathbb H_+$. In particular the Blaschke-Potapov factors are of
the form
$$
B_{a}(p,P)=I_r+(b_a(p)-1)P
$$
where $P$ is a matrix such that $P^2=P$ and $JP\geq 0$ where, in
general, $J$ is signature matrix, and $a\in\mathbb H^+$. The
factors of the third type are now functions of the form
\[
I_r-ku\star (p+w_0)^{-\star}u^* J
\]
where $u\in\mathbb H^r$ is such that $uJu^*=0$, and $w_0+\overline{w_0}=0$, $k>0$. The various
definitions and considerations on rational $J$-unitary functions
introduced in Section 5 have counterparts here. We will not
explicit them, but restrict ourselves to the case $J_1=I_s$ and
$J_2=I_r$, and  only mention the counterpart of the Krein-Langer
factorization in the half-space setting. We outline the results
and leave the proofs to the reader.\\

In the setting of slice hyperholomorphic functions in
$\mathbb H_+$ the counterpart of the kernel $\sum_{n=0}^\infty
p^n\overline{q}^n$ is
\begin{equation}
\label{kernel}
k(p,q)=(\bar p +\bar q)(|p|^2 +2{\rm Re}(p) \bar q +\bar q^2)^{-1}.
\end{equation}

\begin{defn}
The $\mathbb H^{r\times s}$-valued function $S$ slice
hypermeromorphic in an axially symmetric s-domain $\Omega$ which
intersects the positive real line belongs to the class $\mathscr
S_\kappa(\mathbb H_+)$ if the kernel
\[
K_S(p,q)=I_r k(p,q)-S(p)\star k(p,q)\star_r S(q)^*
\]
has $\kappa$ negative squares in $\Omega$, where $k(p,q)$ is
defined in (\ref{kernel}).
\end{defn}

The following realization theorem has been proved in
\cite[Theorem 6.2]{2013arXiv1310.1035A}.

\begin{thm}
Let $x_0$ be a strictly positive real number. A $\mathbb
H^{r\times s}$-valued function $S$ slice hyperholomorphic in an
axially symmetric s-domain $\Omega$ containing $x_0$ is the
restriction to $\Omega$ of an element of $\mathcal
S_\kappa(\mathbb H_+)$ if and only if it can be written as
\begin{equation}
\label{realS}
\begin{split}
S(p)&=H-(p-x_0)\left(G-(\overline{p}-x_0)(\overline{p}+x_0)^{-1}GA\right)\times \\
&\hspace{10mm}\times \left(\frac{|p-x_0|^2}{|p+x_0|^2}A^2 - 2{\rm
Re}~\left(\frac{p-x_0}{p+x_0}\right)A+I\right)^{-1}F,
\end{split}
\end{equation}
where $A$ is a linear bounded operator in a right-sided
quaternionic Pontryagin space $\Pi_\kappa$ of index $\kappa$,
and, with $B=-(I+x_0A)$, the operator matrix
\[
\begin{pmatrix}
B&F\\ G&H\end{pmatrix}\,\,:\,\, \begin{pmatrix}\Pi_k\\
\mathbb H^s\end{pmatrix}\,\,\,\longrightarrow \,\,\,\,
\begin{pmatrix}\Pi_k\\ \mathbb H^r\end{pmatrix}
\]
is co-isometric. In particular $S$ has a unique slice
hypermeromorphic extension to $\mathbb H_+$. Furthermore, when
the pair $(G,A)$ is observable, the realization is unique up to a
unitary isomorphism of Pontryagin right quaternionic spaces.
\label{thmrealS}
\end{thm}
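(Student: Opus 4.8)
The plan is to transport the unit-ball realization theorem (Theorem~\ref{tm:real}) to the half-space through the Cayley-type transform $c(p)=(p-x_0)(p+x_0)^{-1}$, which, as recalled at the start of this section, maps $\mathbb H_+$ bijectively onto $\mathbb B$ and sends $x_0$ to $0$. First I would verify that composition with $c$ intertwines the two reproducing kernels: writing $z=c(p)$ and $w=c(q)$ and computing on a slice $\mathbb C_I$ before extending, one checks an identity of the form $(1-z\bar w)^{-\star}=a(p)\star k(p,q)\star_r\overline{a(q)}$ for an explicit, pointwise-invertible slice-hyperholomorphic factor $a$ built from $(p+x_0)$, with $k$ the half-space kernel of~\eqref{kernel}; this is the exact analogue of the identity~\eqref{sameindex} used in Lemma~\ref{La 5.2}. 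Since congruence by an invertible factor preserves the inertia of the associated Gram matrices, it follows that, setting $\widehat S(z):=S(c^{-1}(z))$, the ball kernel $K_{\widehat S}$ and the half-space kernel $K_S$ have the same number of negative squares. This yields the equivalence $S\in\mathscr S_\kappa(\mathbb H_+)$ if and only if $\widehat S\in\mathscr S_\kappa(I_s,I_r)$, with the same $\kappa$, and disposes of both directions of the stated equivalence at once.

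For the realization itself I would apply Theorem~\ref{tm:real} to $\widehat S$, producing a right quaternionic Pontryagin space $\Pi_\kappa$ of index $\kappa$ and a coisometric colligation $\left(\begin{smallmatrix}A&\tilde B\\ C&D\end{smallmatrix}\right)$ with $(C,A)$ observable and $\widehat S(z)=D+z\,C\star(I-zA)^{-\star}\tilde B$. The substitution $z=c(p)$ must then be pushed through the $\star$-resolvent, and here the decisive tool is Proposition~\ref{formula060813}, by which the slice-hyperholomorphic extension of $z\mapsto C(I-zA)^{-1}$ equals $(C-\bar z\,CA)(I-2\,\mathrm{Re}(z)A+|z|^2A^2)^{-1}$. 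Feeding in $|z|^2=|p-x_0|^2/|p+x_0|^2$ and $\mathrm{Re}(z)=\mathrm{Re}\!\left(\tfrac{p-x_0}{p+x_0}\right)$ reproduces verbatim the resolvent block $\left(\tfrac{|p-x_0|^2}{|p+x_0|^2}A^2-2\,\mathrm{Re}\!\left(\tfrac{p-x_0}{p+x_0}\right)A+I\right)^{-1}$ of~\eqref{realS}; since moreover $\bar z=(\overline p-x_0)(\overline p+x_0)^{-1}$, the operator $C-\bar z\,CA$ takes the shape of the middle block $G-(\overline p-x_0)(\overline p+x_0)^{-1}GA$, the leading factor $z=c(p)$ produces the outer $(p-x_0)$ (the surviving $(p+x_0)^{-1}$ being absorbed in the rearrangement), and $D$ becomes $H$. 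Matching the remaining coefficients then forces the normalization $B=-(I+x_0A)$ for the top-left entry of the half-space colligation---the Cayley image of the ball's main operator $A$---and determines $F,G,H$ from $\tilde B,C,D,A$.

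It remains to see that the coisometry property survives the transform. Conjugating the ball coisometry $\left(\begin{smallmatrix}A&\tilde B\\ C&D\end{smallmatrix}\right)$ by the invertible, index-preserving change of variable attached to $c$ again yields a coisometry between Pontryagin spaces of the same index, and the choice $B=-(I+x_0A)$ is precisely what makes the transformed blocks $\left(\begin{smallmatrix}B&F\\ G&H\end{smallmatrix}\right)$ satisfy the half-space colligation identity. Observability of $(G,A)$ is equivalent to observability of $(C,A)$, so the uniqueness-up-to-unitary-isomorphism clause transfers directly from Theorem~\ref{tm:real}; and since the ball realization extends $\widehat S$ slice hypermeromorphically to all of $\mathbb B$, transporting back by $c^{-1}$ produces the unique slice hypermeromorphic extension of $S$ to $\mathbb H_+$.

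I expect the genuine difficulty to lie in the second step: carrying the $\star$-product and the slice-hyperholomorphic extension correctly through the noncommutative substitution $z=c(p)$, and checking that $B=-(I+x_0A)$ is the unique normalization that simultaneously closes the algebraic identity for $S(p)$ and the coisometry relation. By contrast, the kernel identity together with the inertia argument is a routine analogue of Lemma~\ref{La 5.2}, and the uniqueness and extension clauses follow formally from their unit-ball counterparts.
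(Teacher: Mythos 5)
The paper does not actually prove this theorem: it is quoted verbatim with a pointer to \cite[Theorem 6.2]{2013arXiv1310.1035A}, and Section 7 explicitly leaves the half-space proofs to the reader, so there is no in-paper argument to compare against. That said, your overall strategy --- transport everything through $c(p)=(p-x_0)(p+x_0)^{-1}$, match the kernels by a congruence with the invertible factor built from $(p+x_0)$ so that the number of negative squares is preserved (the analogue of \eqref{sameindex}), apply Theorem \ref{tm:real} to $\widehat S=S\circ c^{-1}$, and pull the realization back --- is exactly the route the paper gestures at in the first lines of Section 7 and the one used in the cited source. The kernel step, the transfer of observability, and the hypermeromorphic-extension clause are fine as you describe them.

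The gap is in the step you yourself identify as the crux, and it is not merely a matter of care with the $\star$-product: the claimed \emph{verbatim} match of \eqref{realS} with the composed ball realization fails. Work on the positive real axis, where everything commutes. If $\widehat S(z)=D+zC(I-zA_1)^{-1}B_1$ is the ball realization, then with $z=c(x)=\tfrac{x-x_0}{x+x_0}$ one gets
\[
S(x)=D+\tfrac{x-x_0}{x+x_0}\,C\Bigl(I-\tfrac{x-x_0}{x+x_0}A_1\Bigr)^{-1}B_1,
\]
whereas the real-axis restriction of \eqref{realS} (via Proposition \ref{formula060813}) is
\[
S(x)=H-(x-x_0)\,G\Bigl(I-\tfrac{x-x_0}{x+x_0}A\Bigr)^{-1}F.
\]
Taking $(A,G,H)=(A_1,C,D)$ forces $-F=(x+x_0)^{-1}B_1$, which is impossible for a constant $F$: the leftover $(p+x_0)^{-1}$ cannot simply be ``absorbed in the rearrangement,'' because it sits between the resolvent and $B_1$. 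The theorem instead declares the ball's main operator to be $B=-(I+x_0A)$, i.e.\ $A=-x_0^{-1}(I+A_1)$, and passing from the resolvent of $A_1$ to that of $A$ requires a genuine resolvent identity driven by $\tfrac{1}{x+x_0}=\tfrac{1-c(x)}{2x_0}$, together with a recombination of \emph{all} the entries $F,G,H$ out of $A_1,B_1,C,D$ (a direct comparison of coefficients shows that no rescaling alone makes the two resolvents proportional). After that recombination the coisometry of $\left(\begin{smallmatrix}B&F\\ G&H\end{smallmatrix}\right)$ is no longer the ball coisometry with relabelled blocks, so it must be re-verified rather than ``conjugated.'' This block-matching computation is the actual content of the theorem; as written, your proposal asserts it rather than performs it.
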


By an abuse of notation, we write
\[
S(p)=H-(p-x_0) G\star((x_0+p)I+(p-x_0)B)^{-\star} F
\]
rather than \eqref{realS}.\\

In the following statement, the degree of the Blaschke product
$B_0$ is the dimension of the associated reproducing kernel
Hilbert space with reproducing kernel $K_{B_0}$.

\begin{thm}
Let $S$ be a $\mathbb H^{r\times s}$-valued function slice
hypermeromorphic in an axially symmetric s-domain $\Omega$ which
intersects the positive real line. Then, $S\in\mathscr
S_\kappa(\mathbb H_+)$ if and only if it can be written as
$S=B_0^{-\star}\star S_0$, where $B_0$ is a $\mathbb H^{r\times
r}$-valued finite Blaschke product of degree $\kappa$, and $S_0\in
\mathscr S_0(\mathbb H_+)$.
\end{thm}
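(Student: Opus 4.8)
The plan is to transfer the ball factorization theorem of Section~6 to the half-space by means of the Cayley-type map. Fix $x_0\in\mathbb R_+$ and set $\varphi(p)=(p-x_0)(p+x_0)^{-1}$, which sends $\mathbb H_+$ onto $\mathbb B$, with inverse $\psi(w)=x_0(1+w)\star(1-w)^{-\star}$. Since $x_0$ is real, both $\varphi$ and $\psi$ are \emph{intrinsic} slice hyperholomorphic functions (their restriction to each slice $\mathbb C_I$ maps $\mathbb C_I$ into itself), so composition with them is well defined on slice hyperholomorphic functions and is a homomorphism for the $\star$-product: if $f=g\star h$ then $f\circ\varphi=(g\circ\varphi)\star(h\circ\varphi)$, exactly as in STEP~1 of the ball proof. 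This $\star$-compatibility is the algebraic backbone of the whole argument.

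First I would establish the half-space analog of the identity in Lemma~\ref{La 5.2}, linking the two kernels. Rewriting the half-space kernel $k(p,q)$ of \eqref{kernel} under $p=\psi(w)$, $q=\psi(\zeta)$ in terms of the ball kernel $\sum_n w^n\bar\zeta^{\,n}$, a direct computation should yield a congruence of the form
\[
K_{S\circ\psi}(w,\zeta)=\lambda(w)\,\bigl(K_S\circ(\psi\times\bar\psi)\bigr)(w,\zeta)\,\overline{\lambda(\zeta)},
\]
with $\lambda$ a scalar factor nonvanishing on $\mathbb B$. Because multiplication by a nonvanishing factor leaves the number of negative squares unchanged, this gives the equivalence $S\in\mathscr S_\kappa(\mathbb H_+)$ if and only if $S\circ\psi$, viewed on the ball with $J_1=I_s$ and $J_2=I_r$, lies in $\mathscr S_\kappa(I_s,I_r)$.

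With this equivalence in hand I would apply the ball Krein--Langer theorem of Section~6 to $S\circ\psi$, obtaining $S\circ\psi=\widetilde B_0^{-\star}\star\widetilde S_0$ with $\widetilde B_0$ a ball Blaschke product of degree $\kappa$ and $\widetilde S_0$ a ball Schur function. Composing with $\varphi$ and invoking the $\star$-compatibility of the first paragraph gives
\[
S=(\widetilde B_0\circ\varphi)^{-\star}\star(\widetilde S_0\circ\varphi),
\]
and I would set $B_0=\widetilde B_0\circ\varphi$ and $S_0=\widetilde S_0\circ\varphi$; reversing the kernel congruence shows $S_0\in\mathscr S_0(\mathbb H_+)$. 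The converse implication runs the same transfer backwards: a half-space factorization $S=B_0^{-\star}\star S_0$ pulls back under $\psi$ to a ball factorization of $S\circ\psi$, forcing $\kappa$ negative squares on the ball and hence, by the congruence, in the half-space.

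The step I expect to be the main obstacle is showing that $B_0=\widetilde B_0\circ\varphi$ is genuinely a half-space Blaschke product of degree exactly $\kappa$. Concretely one must check that $\varphi$ carries each elementary ball factor to a half-space factor of the same local degree: a point factor $B_a$ should become, up to a harmless right unitary constant, a factor $b_{\psi(a)}$, and a spherical factor $B_{[a]}$ should become $b_{[\psi(a)]}$, with the $\star$-ordering of the factors preserved. Granting this factor-by-factor correspondence, degree is additive under $\star$-products and invariant under the transform, so $\deg B_0=\deg\widetilde B_0=\kappa$; equivalently $\dim\mathcal H(B_0)=\dim\mathcal H(\widetilde B_0)=\kappa$ by the dimension proposition of Section~3. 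Verifying this correspondence, and that no spurious cancellations or reorderings arise in the noncommutative $\star$-product, is the delicate bookkeeping that the transfer conceals.
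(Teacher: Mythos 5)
Your proposal is exactly the route the paper intends: Section 7 explicitly says the half-space results are obtained by translating the ball results through the map $p\mapsto (p-x_0)(p+x_0)^{-1}$ and "leaves the proofs to the reader," so there is no written proof beyond that outline. Your Cayley-transfer argument, including the kernel congruence preserving the number of negative squares, the $\star$-compatibility of composition with the intrinsic map, and the factor-by-factor correspondence of Blaschke(-Potapov) factors, is a correct filling-in of precisely that outline.
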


\end{document}